\setlist{topsep=0pt, itemsep=-3pt}
\newcommand{\arXiv}[1]{Preprint available on \url{http://arxiv.org/abs/#1}}
\theoremstyle{plain}
\newtheorem{THM}{Theorem}[section]
\newtheorem*{THM*}{Theorem}
\newtheorem{PROP}[THM]{Proposition}
\newtheorem{LEMMA}[THM]{Lemma}
\newtheorem{COR}[THM]{Corollary}
\newtheorem{CLAIM}[THM]{Claim}
\theoremstyle{definition}
\theoremstyle{definition}
\theoremstyle{remark}
\newcommand{\pr}[1]{\mathbb{P} \left[ #1 \right]}
\newcommand{\er}[1]{\mathbb{E} \left[ #1 \right]}
\newcommand{\var}[1]{\text{Var} \left[ #1 \right]}
\newcommand{\tdots}{\mathrel{{.}\,{.}}\nobreak}
\newcommand{\subs}{\subseteq}
\newcommand{\gnp}{G_{n,p}}
\newcommand{\gn}{G_{n, \frac 12}}
\newcommand{\ndl}{(n, d, \lambda)}
\newcommand{\turan}{\mathrm{ex}}
\newcommand{\mb}{\mathbf}
\newcommand{\Wst}{W_{\mb s,\mb  t}}
\newcommand{\Wstp}{W_{\mb s', \mb t'}}
\newcommand{\ffp}[2]{\left(#1 \right)_{#2}}       
\newcommand\restrict[1]{\raisebox{-.3ex}{$|$}_{#1}}
\def\tk#1{ {#1}}   
\title{Zero forcing number of graphs}
\author{ Thomas Kalinowski \thanks {School of Mathematical and Physical Sciences, University of Newcastle, Callaghan, NSW 2308, Australia. \ \ \ \ \ Email:~thomas.kalinowski@newcastle.edu.au.}
\and
Nina Kam\v{c}ev \thanks{Department of Mathematics, ETH, 8092 Zurich. Email: nina.kamcev@math.ethz.ch.}
\and
Benny Sudakov \thanks{Department of Mathematics, ETH, 8092 Zurich.
Email: benjamin.sudakov@math.ethz.ch.
	}
\date{}
}
\begin{document}
    \vspace*{-55pt}
    {\let\newpage\relax\maketitle}


\vspace{-6pt}

\abstract{
A subset $S$ of initially infected vertices of a graph $G$ is called \emph{forcing} if we can infect the entire graph by iteratively applying the following process. At each step, any infected vertex which has a unique uninfected neighbour, infects this neighbour.
The \emph{forcing number} of $G$ is the minimum cardinality of a forcing set in $G$. In the present paper, we study the forcing number of various classes of graphs, including graphs of large girth, $H$-free graphs for a fixed bipartite graph $H$, random and pseudorandom graphs.
}

\section{Introduction}
    
    Let $G$ be a simple, undirected graph on the vertex set $V$. The \emph{zero forcing process} on $G$ is defined as follows. Initially, there is a subset $S$ of black vertices, while all other vertices are said to be white. At each time step, a black vertex with exactly one white neighbour will \emph{force} its white neighbour to become black. The set $S$ is said to be a \emph{zero forcing set} if, by iteratively applying the forcing step, all of $V$ becomes black.
    The \emph{zero forcing number} of $G$ is the minimum cardinality of a zero forcing set in $G$, denoted by $Z(G)$.  
    Note that given an initial set of black vertices, the set of black vertices obtained by applying the colour-change rule until no more changes are possible is unique.  We will often use the adjective `forcing' instead of `zero forcing'.

	The forcing process is an instance of a propagation process on graphs (in particular, it is a cellular automaton). 
	Such processes are a common topic across mathematics and computer science (see, e.g.,~\cite{cfkr}, \cite{bbm}, \cite{kkt}, \cite{fw}). In other fields (statistical mechanics~\cite{clr}, physics~\cite{adkmz}, social network analysis~\cite{granovetter}), diverse graph processes are used to model technical or societal processes. For an overview of the different models and applications, refer to the book~\cite{bbv}.

	The zero forcing process was introduced in~\cite{bg} and used in~\cite{bbbg} as a criterion for quantum controllability of a system. Independently,  \cite{aimminrank} have introduced it to bound the minimum rank, or equivalently, the maximum nullity of a graph $G$.  Given an $n$-vertex graph $G$, let $M(G)$ denote the maximum nullity over all symmetric real-valued matrices $A$ whose zero-nonzero pattern of the off-diagonal entries is described by the graph $G$. This means that for $i \neq j$, the entry $A_{ij}$ is non-zero if and only if $ij$ is an edge in $G$, whereas the diagonal entries are chosen freely. The minimum rank of $G$ is $n-M(G)$.
	This parameter has been extensively studied in the last fifteen years, largely due to its connection to inverse eigenvalue problems for graphs, singular graphs, biclique partitions and other problems.
         Among several tools introduced to study the minimum rank, the zero forcing number has the advantage that its definition is purely combinatorial. In~\cite{aimminrank}, it was shown that $Z(G)\geq M(G)$ for all graphs $G$. 
        To see this, suppose that $A$ is a matrix whose zero pattern is described by $G$, and $S$ is a zero forcing set of cardinality $|S|$ smaller than the nullity of $A$. This guarantees that we can construct a vector $x \neq 0$ such that $Ax=0$ and $x \restrict{S}= 0$. But then by iteratively applying the forcing step to the components of $x$, we can show $x=0$, which is a contradiction. 
		The minimum rank and forcing number of some specific families of graphs have also been computed in~\cite{aimminrank}. As a simple example, a complete graph $K_n$ on $n$ vertices has $Z(K_n)=M(K_n) = n-1$, whereas the $n$-vertex path $P_n$ has $Z(P_n)=M(P_n)=1$. 
More results on this topic can be found in~\cite{catalog} and~\cite{fh13}.

   Recently, there has been a lot of interest in studying the forcing number of graphs for its own sake, 
and its relation to other graph parameters, such as the path cover number~\cite{gprs}, connected domination number~\cite{acdp}, and the chromatic number~\cite{taklimi}.
	Among others,~\cite{cp} and~\cite{gr} contain upper bounds on the zero forcing number of a graph in terms of its degrees. It is easy to see that the trivial lower bound on the zero forcing number of a graph is $Z(G) \geq \delta -1$. Improving this trivial upper bound, and confirming the earlier conjecture of Davila and Kenter,~\cite{dk}, the authors of~\cite{dks} showed that $$Z(G) \geq \delta + (\delta-2)(g-3),$$ 
for a graph $G$ with minimum degree $\delta \geq 2$ and girth $g \geq 3$ (the \emph{girth} is the length of the shortest cycle in a graph).    
    Our first result substantially improves on this bound, with the  exception of very small values of $\delta$.
    \begin{restatable}{THM}{girth} \label{thm:girth}
        Let $G$ be a graph of girth $g$ with minimum degree $\delta$.
        \begin{enumerate}
            \item If $g=2k+1$ for $k \in \mathbb{N}$, then $Z(G) \geq  e^{-1} \left(\frac{\delta^k}{k+1} - \delta^{k-1} \right) $.
            \item If $g=2k+2$ for $k \in \mathbb{N}$, then $Z(G) \geq 2e^{-1} \left(\frac{\delta^k}{k+1} - \delta^{k-1} \right) .$
        \end{enumerate}
    \end{restatable}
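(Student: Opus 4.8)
The plan is to work directly with the forcing process. Fix a minimum zero forcing set $S$ and run the process; as is standard, this partitions $V(G)$ into $|S|$ vertex-disjoint directed paths, the \emph{forcing chains}, one starting at each vertex of $S$, and along each chain every non-final vertex forces its successor. I would then localise around a single well-chosen vertex $v$ (for even girth, a well-chosen edge): since $g \ge 2k+1$, the ball $B_k(v)$ is tree-like — its BFS-tree from $v$ is a genuine tree, with at least $\delta(\delta-1)^{k-1}$ vertices in its outermost layer $L_k(v)$, and for even girth $g \ge 2k+2$ the ball in fact induces a tree (the odd case needs a little extra care, as two vertices of $L_k(v)$ may be joined along a $(2k+1)$-cycle). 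Intersecting the forcing chains with $B_k(v)$ chops them into vertex-disjoint paths, each of which lies inside this tree; call their number $q$.

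The core is a two-sided estimate of $q$. From below: a leaf of a tree cannot be an interior vertex of a path contained in the tree, so every leaf of $B_k(v)$ is an endpoint of one of the pieces, giving $2q \ge |L_k(v)| \ge \delta(\delta-1)^{k-1}$; alternatively (useful in the odd case, and the route to the $k+1$) a path in a radius-$k$ tree has at most $2k+1$ vertices, so $q \ge |B_k(v)|/(2k+1)$. Independently, the first vertex of each piece, read in the direction of its chain, is either a vertex of $S$ in $B_k(v)$, or else a vertex of $B_k(v)$ forced by a vertex lying outside $B_k(v)$ — which, since only vertices of $L_k(v)$ have neighbours outside, must lie in $L_k(v)$ and be forced from $L_{k+1}(v)$. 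Hence $q = |S \cap B_k(v)| + E_v$, where $E_v$ counts these "external forces" into the ball.

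Combining, $|S \cap B_k(v)| \ge q - E_v$, so the whole game is to choose $v$ (and the radius at which to apply the count) so that $E_v$ is small relative to $q$. Here I would again use the process: whenever $w \in L_k(v)$ is forced from outside, its forcer $x \in L_{k+1}(v)$ has all of its $\ge \delta-1$ other neighbours black at that moment, so external forces cannot be dense along a short radial stretch; choosing $v$ suitably — e.g. by averaging over all potential centres, or by taking $v$ at the far end of a longest forcing chain — should keep $E_v$ under control. Optimising the resulting inequality over the radius $t \in \{1,\dots,k\}$ at which one applies the count (this is where the $k+1$ enters), together with the loss from the external forces (responsible for the $e^{-1}$), then yields $Z(G) = |S| \ge |S \cap B_k(v)| \ge e^{-1}\!\left(\tfrac{\delta^k}{k+1}-\delta^{k-1}\right)$. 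For even girth $g = 2k+2$ the same argument centred on an edge $uv$ applies verbatim — evenness of $g$ is exactly what makes the edge-ball tree-like, and it has roughly twice the volume and twice as many leaves, producing the factor $2$; for the smallest values of $\delta$ one can simply fall back on the already-quoted bound $Z(G)\ge\delta+(\delta-2)(g-3)$.

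I expect the choice of centre $v$ and the accompanying control of $E_v$ to be the main obstacle: the path-decomposition counts are routine, but turning the local sparsity of external forces into a clean global estimate of exactly the stated shape, and in particular extracting the constant $e^{-1}$ from the optimisation, is where the real work lies.
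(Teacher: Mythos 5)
There is a genuine gap. Your chain-decomposition set-up is fine as far as it goes (the chains partition $V(G)$, each piece inside $B_k(v)$ starts either at a vertex of $S$ or at a boundary vertex forced from $L_{k+1}(v)$, and the leaf/path-length counts in the tree-like ball are routine, modulo the extra care you note for odd girth where $G[B_k(v)]$ may contain edges inside $L_k(v)$). But the entire theorem hinges on the step you defer: showing that for \emph{some} centre $v$ the number $E_v$ of external forces is small compared with $q$. Nothing in the sketch establishes this, and it is not a technicality. If $S$ is concentrated far from $v$, the process can sweep into $B_k(v)$ entirely through the boundary, so that $E_v \approx q$ and $|S\cap B_k(v)|\approx 0$; since $E_v$ counts forcing events rather than initial vertices, it is not bounded in terms of $|S|$. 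The local observation you invoke (the forcer $x\in L_{k+1}(v)$ has all other neighbours black) constrains vertices mostly \emph{outside} the ball and gives no evident control inside it. The averaging you suggest also does not obviously close the gap: summing over all centres, $\sum_v q_v$ and $\sum_v E_v$ are of comparable order (each forcing event $x\to w$ is charged to every $v$ at distance exactly $k$ from $w$ with $x$ outside the ball, which is a sphere-sized set), so no cancellation of the required strength falls out. Finally, the claimed constants are asserted, not derived: even in the ideal case $E_v=0$, your path-length count gives $q\geq |B_k(v)|/(2k+1)$, which is off from the stated $\delta^k/(e(k+1))$ shape, and the optimisation "where the $e^{-1}$ comes from" is never performed. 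As written, the proposal is a plausible programme, not a proof.

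For comparison, the paper avoids all local/centre-selection issues by a short global argument: starting from a forcing set $S$ of size $s\leq n/(k+1)$, run the process only until the black set $T$ has $|T|=(k+1)s$. Every vertex that forced during this truncated process has \emph{all} of its $\geq\delta$ neighbours inside $T$, and there are at least $ks$ such vertices, so $e(T)\geq ks\delta/2$ and $G[T]$ has average degree at least $k\delta/(k+1)$. Applying the Alon--Hoory--Linial Moore bound to $G[T]$ (which has girth at least $g$) gives $(k+1)s\geq\bigl(\tfrac{k\delta}{k+1}-1\bigr)^k$, and the stated constants follow from elementary estimates; the even-girth case is identical with the factor $2$ coming from the even-girth Moore bound. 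If you want to salvage your approach, the missing ingredient is precisely a lemma guaranteeing a centre (or a weighted family of centres) for which external forces into the ball are a bounded fraction of the pieces; until that is proved, the argument does not yield the theorem.
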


	The crucial ingredient of the proof is an upper bound on the density of a graph which contains no cycles $C_3, C_4, \dots C_{g-1}$. This is an instance of the so-called Tur\'an problem (see, e.g.,~\cite{fs}). Given a graph $H$, we define the Tur\'an number  $\turan(n, H)$ to be the maximum number of edges $e(G)$ over all the $n$-vertex graphs $G$ not containing a subgraph isomorphic to $H$. In general, if a graph $G$ does not contain $H$ as a subgraph, we refer to it as \emph{$H$-free}.
	The Tur\'an numbers of graphs have been extensively studied, and the asymptotic value of $\turan(n,  H)$ is known for all non-bipartite graphs $H$. Denote the complete bipartite graph with vertex classes of order $a$ and $b$ by $K_{a, b}$. A celebrated theorem of K\"ovari, S\'os and Tur\'an~\cite{kst} says that 
	for  $a \leq b$, $\turan(n, K_{a, b}) = O \left(n^{2-\frac 1a} \right)$. This implies that for every bipartite graph $H$, there exists $c_H < 1$ such that $\turan(n, H) = O \left(n^{1+c_H}\right)$.
		Using our approach based on the Tur\'an numbers, we can extend Theorem~\ref{thm:girth} to $H$-free graphs $G$, substantially improving the trivial bound $Z(G) \geq \delta-1$.
	\begin{THM} \label{thm:h-free}
	    Suppose that for a graph $H$ and all $n \geq n_0(H)$, $\mathrm{ex} (n, H) \leq \beta_H n^{1+c_H}$. Let $G$ be an $H$-free graph of minimum degree $\delta \geq 2n_0$. Then
	    $$Z(G) \geq 2^{-1- \frac{2}{c_H}} \left(\frac{\delta}{\beta_H} \right)^{\frac{1}{c_H}}.$$
	\end{THM}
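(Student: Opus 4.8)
The plan is to read off a dense $H$-free subgraph of bounded order directly from the forcing process, and then apply the hypothesis $\turan(n,H)\le\beta_H n^{1+c_H}$ to it.

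First I would fix a minimum zero forcing set $B$ of $G$ and write $z=|B|=Z(G)$. Run the forcing process one force at a time: at step $i$ a black vertex $u_i$ forces a white vertex $v_i$, and let $B^{(i)}=B\cup\{v_1,\dots,v_i\}$ be the black set after step $i$. Since $B$ is forcing, every vertex outside $B$ is forced exactly once, so the $v_i$ are distinct and avoid $B$, giving $|B^{(i)}|=z+i$ and a total of exactly $n-z$ forcing steps; similarly each vertex forces at most once, so the $u_i$ are distinct, and of course $u_i\in B^{(i-1)}\subseteq B^{(i)}$. The one structural fact needed is that when $u_i$ forces $v_i$, the vertex $v_i$ is the \emph{only} non-black neighbour of $u_i$ at that time, so $N_G(u_i)\subseteq B^{(i-1)}\cup\{v_i\}=B^{(i)}$; in words, the forcer $u_i$ drags its entire $G$-neighbourhood into $B^{(i)}$.

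Next, suppose first that $z\le n/2$, and apply this with the truncation $m=z$ (legitimate since $m=z\le n-z$). The induced subgraph $G[B^{(z)}]$ has $z+m=2z$ vertices and is $H$-free. For each of the $m$ distinct vertices $u_1,\dots,u_m$ we have $\deg_{G[B^{(z)}]}(u_i)=\deg_G(u_i)\ge\delta$ by the structural fact, so double counting gives $2\,e(G[B^{(z)}])\ge\sum_{i=1}^m\deg_{G[B^{(z)}]}(u_i)\ge m\delta=z\delta$. Since $z\ge\delta-1\ge n_0$ (using $\delta\ge2n_0$), we have $2z\ge n_0$, so the hypothesis yields $e(G[B^{(z)}])\le\turan(2z,H)\le\beta_H(2z)^{1+c_H}$. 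Combining the two estimates, $\tfrac12 z\delta\le\beta_H(2z)^{1+c_H}$, i.e.\ $\delta\le2^{2+c_H}\beta_H z^{c_H}$, and rearranging gives precisely $z\ge2^{-1-2/c_H}(\delta/\beta_H)^{1/c_H}$.

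If instead $z>n/2$, the truncated argument is unavailable (the subgraph would be larger than $G$), so I would argue with $G$ itself: $n\delta/2\le e(G)\le\turan(n,H)\le\beta_H n^{1+c_H}$ (here $n\ge n_0$), which forces $n\ge(\delta/(2\beta_H))^{1/c_H}$ and hence $z>n/2\ge2^{-1-1/c_H}(\delta/\beta_H)^{1/c_H}\ge2^{-1-2/c_H}(\delta/\beta_H)^{1/c_H}$, the last step using $c_H>0$. In both cases the claimed bound holds. The only real idea here is the structural observation together with the realisation that it confines $z$ vertices of degree at least $\delta$ inside an $H$-free graph on just $2z$ vertices; after that the argument is a double count, the Tur\'an hypothesis, and one line of algebra, the sole wrinkle being the harmless range $z>n/2$.
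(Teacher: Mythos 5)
Your proposal is correct and follows essentially the same argument as the paper: the forcers after $z$ steps are distinct vertices whose entire neighbourhoods lie in the $2z$-vertex black set, so that set induces an $H$-free graph of average degree at least $\delta/2$, to which the Tur\'an hypothesis is applied (the paper handles the boundary case $s\geq n/2$ by applying the same bound to $G$ itself, just as you do). The only cosmetic differences are that the paper first repackages the hypothesis as a lower bound on $|V(G_1)|$ (its Claim~2.1, reused for Corollary~2.2) and guarantees applicability via $\delta/2\geq n_0$ rather than via $z\geq\delta-1$.
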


	The authors of~\cite{aimminrank} report that somewhat surprisingly, $M(G)= Z(G)$ for many graphs for which $M(G)$ was known. Our next theorem shows that for most graphs,  $M(G)$ and $Z(G)$ are actually far apart.  
    We consider the random graph model $\gnp$. This is an $n$-vertex graph in which every pair of vertices is adjacent randomly and independently with probability $p$. With an abuse of notation, we write $\gnp$ for the sampled graph, as well as the underlying probability space. 
	The model  $\gn$ is particularly interesting since it assigns the same probability to all the $2^{\binom n2}$ graphs, thus allowing us to make statements about a typical graph. We say that an event in $\gnp $ holds \emph{with high probability} if its probability tends to 1 as $n$ tends to infinity.  The  standard $O$-notation is used for  the  asymptotic  behaviour  of  the relative order of magnitude of two sequences, depending on a parameter $n \rightarrow \infty$.	
	Hall et al.~\cite{hhms} have shown that with high probability, the maximum nullity of a random graph $\gn$ lies between $0.49n$ and $0.86n$. On the other hand, we will show that  the zero forcing number of a typical graph is almost as high as $n$.
      \begin{THM} \label{thm:forcing_gnp}
        Let $p = p(n)$ satisfy $\frac{\log^2 n}{\sqrt{n}} \leq p \leq \frac 23$, then with high probability
        $$Z\left(\gnp \right) = n- \left(2+ \sqrt{2}+o(1) \right) \cdot \frac{\log (np)}{-\log (1-p)}.$$
    \end{THM}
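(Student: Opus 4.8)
The plan is to translate the forcing condition into a counting problem about a combinatorial certificate, and then to run matching first‑ and second‑moment estimates. If $S$ is a forcing set, $W=V\setminus S$, $|W|=t$, list $W$ as $w_1,\dots,w_t$ in the order the vertices turn black; the vertex forcing $w_i$ is some $v_i$ that is black just before that step with $w_i$ as its unique white neighbour, so $v_i\in S\cup\{w_1,\dots,w_{i-1}\}$, $v_iw_i\in E$ and $v_iw_j\notin E$ for all $j>i$. Conversely, any ordering of $W$ together with such vertices $v_i$ certifies that $S$ forces, and since a vertex can force at most once the $v_i$ are automatically distinct. Hence $n-Z(\gnp)$ is the largest $t$ admitting such a \emph{certificate}. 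Two facts will be used throughout: every certificate prescribes exactly $t$ edges and exactly $\binom t2$ non‑edges, so a fixed one holds with probability $p^t(1-p)^{\binom t2}$; and for a vertex $w$ and a set $U$ of $s$ other vertices, the expected number of neighbours of $w$ avoiding $U$ is $\approx np(1-p)^s$, which is $\gg1$ while $s$ is a little below $m:=\frac{\log(np)}{-\log(1-p)}$ and $\ll1$ once $s$ exceeds $m$. So in a certificate the last $\approx m$ vertices (those with short ``white suffix'') can be forced by external vertices $v_i\in S$ in $\gg1$ ways each, whereas $w_i$ with $i\lesssim t-m$ can only be forced by an already‑black vertex of $W$ or by a rare, specially located external vertex; making this dichotomy precise requires putting the cut‑off at $m\pm o(m)$.

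For the lower bound $Z\le n-(2+\sqrt2-o(1))m$ I would exhibit a certificate of size $t=(2+\sqrt2-o(1))m$, with $W$ split into an \emph{internal} block forced along a chain $w_{i-1}\to w_i$ --- which forces that block to induce a path whose non‑last vertices also avoid the rest of $W$ --- and an \emph{external} block forced directly from $S$. Any such configuration has probability $p^t(1-p)^{\binom t2}$; I would pick the two block sizes so that the expected number of configurations tends to infinity while the configuration stays rigid enough for a second moment argument. The resulting constrained optimisation (the internal chain must be an induced path, which caps its length; the external block must be ``payable'', which caps its size) is a quadratic whose relevant root is $2+\sqrt2$. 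The main work here is the variance estimate, i.e.\ bounding the contribution of pairs of configurations sharing many vertices, which is most delicate near the ``cheap'' ends of the chain and of the external block, where the rigidity is weakest.

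For the upper bound $Z\ge n-(2+\sqrt2+o(1))m$ I would run a first moment, but not over all certificates: there are $\approx n^{2t}$ of them and $n^{2t}p^t(1-p)^{\binom t2}\to0$ only for $t$ well beyond $(2+\sqrt2)m$, because a single certificate begets a huge family of others by re‑choosing the $\approx m$ cheap final external forcers (and, more subtly, by re‑choosing which positions play which role). The fix is to count a suitably \emph{reduced} certificate --- the set $W$ together with only the data that is genuinely forced --- and to bound $\Pr[\exists\text{ good }W\text{ of size }t]$ by the expected number of present reduced certificates. Designed correctly this expectation has the shape $n^{\Theta(m)}p^{\Theta(m)}(1-p)^{\Theta(m^2)}$ and, once rewritten purely in terms of $m=\frac{\log(np)}{-\log(1-p)}$, tends to $0$ exactly when $t/m>2+\sqrt2$, the surd being the larger root of the balancing quadratic. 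The crux --- and, I expect, the hardest part of the whole proof --- is choosing the reduced certificate: cruder versions (merely forgetting the last $m$ forcers, say) only yield weaker thresholds like $2+\sqrt3$, so one must also exploit the flexibility in the early part of the certificate, all of it uniform over $\log^2n/\sqrt n\le p\le2/3$ and across the $o(m)$‑wide window around position $t-m$ where the cheap/expensive split is not sharp.
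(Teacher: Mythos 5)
Your reduction to certificates is exactly the paper's notion of a $k$-witness, and your overall plan (first moment over a carefully reduced structure for the bound $Z\geq n-(2+\sqrt2+o(1))m$, second moment for the other direction, with $m=\frac{\log (np)}{-\log(1-p)}$) is the right shape; but both halves are missing the idea that actually produces the constant. For the first-moment half you explicitly leave the choice of ``reduced certificate'' open, calling it the crux. In the paper this choice is concrete: keep only the first $\ell=\frac{\sqrt2}{2}k$ forcers and the last $\ell$ forced vertices (a subwitness), and the optimisation over how much to keep is precisely the maximisation of $\Phi(a,b)/(k(a+b))$ in Lemma~\ref{lemma:maxdensity1}, whose maximum value $1-\frac{\sqrt2}{2}$ at $a=b=\frac{\sqrt2}{2}k$ is where $2+\sqrt2$ comes from. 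In addition, in your range $p$ may be as small as $n^{-1/2}\log^2 n$, so $k$ is polynomial in $n$ and the $(\ell!)^2$ factor from ordering the tuples is no longer absorbable; the paper needs a second device (loose subwitnesses: unordered blocks of size about $1/p$ paired by bijections, at the price of giving up the non-edges inside each block, as in Lemma~\ref{lemma:gnp_fmm}) to remove a $k^{k+o(k)}$ factor. Your sketch addresses neither ingredient, and without them the $0$-statement is not proved.

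For the existence half, your construction rests on the heuristic that positions more than about $m$ from the end cannot be forced by external vertices, so you force them along an induced chain. That heuristic is misleading: Lemma~\ref{lemma:gnp_smm} shows that witnesses in which \emph{every} forced vertex has its own external forcer (made ``divided'' by placing forcers in $V_1$ and forced vertices in $V_2$) exist whp up to order $(1-\epsilon)(2+\sqrt2)m$; the early external forcers are exactly the ``rare, specially located'' vertices you discard, and their existence is a large-deviation event certified by the second moment, not by a per-position expectation count. Moreover, with the caps you state (chain length at most the induced-path threshold $\approx 2m$, external block at most $\approx m$ by ``payability''), the total is only about $3m<(2+\sqrt2)m$; and if one balances a mixed chain-plus-external structure so that its expected count stays large at $(2+\sqrt2-\epsilon)m$, the external block is forced to have size about $(1+\sqrt2)m$, violating your payability cap. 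So the claim that your constrained optimisation is ``a quadratic whose relevant root is $2+\sqrt2$'' is asserted, not derived, and does not follow from the constraints you impose. Finally, the ingredient that actually controls the variance --- the bound $\Phi(a,b)\leq\left(1-\frac{\sqrt2}{2}\right)k(a+b)$ on superdiagonal pairs shared by two overlapping witnesses, which is the same optimisation as in the first half --- is nowhere identified; flagging ``the variance estimate is the main work'' does not substitute for it.
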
   
  \noindent  In particular, for $p = \frac 12$ we have $Z\left(\gn \right) = n- \left(2+ \sqrt{2}+o(1) \right)  \log_2 n$, whereas for $p = o(1)$ the formula simplifies to $ Z\left(\gnp \right) = n- \left(2+ \sqrt{2}+o(1) \right)p^{-1} \log (np)$.

	There is a natural trend in probabilistic combinatorics to explore the possible extensions of results about random graphs to the pseudorandom setting.
	A graph is pseudorandom if its edge distribution resembles the one of $\gnp$.
	There are several formal approaches to pseudorandomness. Here, we will use the one based on the spectral properties of the graph.  The \emph{adjacency matrix} of a graph $G = (V, E)$ with vertex set $V = [n]$ is an $n \times n$ matrix whose entry $a_{ij}$ is 1 if $\{i, j \} \in E$, and $0$ otherwise.  
	    The \emph{eigenvalues of a graph} $G$ are the eigenvalues of its adjacency matrix.  An \emph{$\ndl$ graph} is a $d$-regular $n$-vertex graph in which all eigenvalues but the largest one are at most $\lambda$ in absolute value. If $G$ is an $\ndl$ graph, its largest eigenvalue is $\lambda_1 = d$, and the difference $d-\lambda$ is called the \emph{spectral gap}. It is well known (see, e.g.,~\cite{ks06}) that the larger this gap is, the more closely the edge distribution of a regular graph $G$ approaches that of the random graph with the corresponding edge density.
 We prove a theorem which provides spectral bounds on the zero forcing number of a graph. The lower bound is given in terms of the smallest eigenvalue of $G$, akin to the celebrated result of Hoffman on the independence number~\cite{hoffman}, whereas the previously defined $\lambda$ is used for the upper bound. 
	    \begin{THM} \label{thm:zeroforcing_pseudo}
                Let $G$ be an $\ndl$-graph with  smallest eigenvalue $\lambda_{\min}$. Then
                \begin{enumerate}
                    \item $ \displaystyle Z(G) \geq n \left(1+\frac {2\lambda_{\min}}{d-\lambda_{\min}}  \right).$
                    \item $\displaystyle{  Z(G) \leq n \left(1-\frac {1}{2(d-\lambda)} \log \left(\frac{d-\lambda}{2\lambda +1} \right) \right).}$
                \end{enumerate}
            \end{THM}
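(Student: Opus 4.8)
I would prove the two inequalities by rather different arguments: the lower bound by a Hoffman-type spectral estimate applied to the complement of a forcing set, and the upper bound by explicitly constructing a forcing set whose complement is a long induced path.

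For part (i), fix a minimum zero forcing set $S$ and run the forcing process. The forcing edges (from each forcer to the vertex it forces) form a spanning union of $Z(G)$ vertex-disjoint paths, and since at the moment a vertex $u$ forces $v$ all other neighbours of $u$ have already turned black (hence were coloured strictly earlier), each of these ``forcing chains'' is an \emph{induced} path of $G$. Thus $T:=V\setminus S$ is the union of these chains with their first vertices deleted, so $G[T]$ is a vertex-disjoint union of induced paths together with whatever edges run between distinct chains. The crux is a structural sparsity estimate, roughly $2e(G[T])\le |\lambda_{\min}|\cdot|T|$ (equivalently, $G[T]$ has average degree at most $|\lambda_{\min}|$): the within-chain edges are exactly $|T|$ minus the number of non-empty chains, and the order in which vertices get forced must be used to bound the cross-chain edges. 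Granting this, one runs Hoffman's computation: writing $\mathbf 1_T=\tfrac{|T|}{n}\mathbf 1+y$ with $y\perp\mathbf 1$, one has $2e(G[T])=\mathbf 1_T^{\top}A\mathbf 1_T\ge \tfrac dn|T|^2+\lambda_{\min}(|T|-\tfrac{|T|^2}{n})$; combining this with the sparsity estimate and rearranging gives $|T|\le \tfrac{-2\lambda_{\min}}{d-\lambda_{\min}}n$, i.e. $Z(G)=n-|T|\ge n(1+\tfrac{2\lambda_{\min}}{d-\lambda_{\min}})$. The main obstacle is this sparsity estimate: bounding, in terms of $\lambda_{\min}$, how many edges of $G[T]$ can go between different forcing chains.

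For part (ii), I use the following construction. If $P=p_0p_1\cdots p_\ell$ is an induced path of $G$ and $U:=\{p_1,\dots,p_\ell\}$, then $S:=V\setminus U$ is a zero forcing set: since $P$ is induced, $N(p_i)\cap V(P)=\{p_{i-1},p_{i+1}\}$ for $1\le i\le\ell-1$ and all other neighbours of $p_i$ lie in $S$, so starting from $S$ the vertex $p_0\in S$ has $p_1$ as its unique white neighbour and forces it, then $p_1$ forces $p_2$, and inductively $p_i$ forces $p_{i+1}$ until all of $U$ is black. Hence $Z(G)\le n-\ell$, and it suffices to show that an $\ndl$-graph has an induced path on at least $1+\tfrac{n}{2(d-\lambda)}\log\tfrac{d-\lambda}{2\lambda+1}$ vertices.

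To find such a path, extend it greedily: from an induced path $v_0\cdots v_k$, a legal next vertex is any member of $N(v_k)$ that lies neither on the path nor in $N(v_0)\cup\cdots\cup N(v_{k-1})$, and the extension stalls only when all $d$ neighbours of $v_k$ are forbidden. Both the size of the forbidden neighbourhood (via the expander mixing lemma, the set of vertices missing all of $v_0,\dots,v_{k-1}$ has size at most $\lambda^2n^2/(d^2k)$, so the forbidden region stays away from all of $V$ until $k$ is of the stated order) and the number of neighbours of $v_k$ that it contains ($|N(v_k)\cap\{v_0,\dots,v_k\}|=1$ since $P$ is induced, and the codegrees are controlled on average through $\mathbf 1_W^{\top}A^2\mathbf 1_W\le \tfrac{d^2}{n}|W|^2+\lambda^2|W|$) are governed by the spectral gap, and an admissible extension exists as long as the current length is below the bound — the logarithm arising from the number of steps before the neighbourhoods of the path vertices saturate $V$. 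The real difficulty is that once $\lambda\gtrsim\sqrt d$ the spectral gap no longer controls individual codegrees (the worst-case bound $d^2/n+\lambda^2$ is vacuous), so the greedy step has to be carried out by a random or averaging choice that keeps the running codegree sum small; ensuring that one endpoint has a private external neighbour so that it may be deleted is a routine afterthought, already absorbed in the ``$1+$''.
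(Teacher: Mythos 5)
Both halves of your proposal hinge on auxiliary statements that you yourself flag as ``the main obstacle''/``the real difficulty'', and in both cases the paper avoids exactly that obstacle by working with the weaker witness structure of Lemma~\ref{lemma:witness} instead of the stronger structures you impose. For part (i), your argument needs the sparsity estimate $2e(G[T])\le|\lambda_{\min}|\,|T|$ for $T=V\setminus S$, and the forcing order does not deliver it: when $u$ forces $v$, the constraint is that \emph{$u$} is non-adjacent to all later-forced vertices, so the only edges inside $T$ that are forbidden are those from a vertex that itself forces to vertices forced after that moment. Chain-end vertices of $T$ (those that never force) are completely unconstrained towards later-forced vertices, so cross-chain edges among them are not controlled by the forcing order, and a dense induced subgraph of $G[T]$ only forces $\lambda_{\min}\le-1$ by interlacing, so no contradiction with small $|\lambda_{\min}|$ arises. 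The paper needs no bound on $e(G[T])$ at all: by Lemma~\ref{lemma:witness} a forcing set of size $n-k$ yields a $k$-witness $(\mb s,\mb t)$, and the two sets $U=\{s_1,\dots,s_{k/2}\}$, $W=\{t_{k/2+1},\dots,t_k\}$ span \emph{no} edges by the superdiagonal condition; applying Theorem~\ref{thm:ks:edgedist}(i) to the pair $(U,W)$ immediately gives $k/2\le\frac{-\lambda_{\min}}{d-\lambda_{\min}}n$. Your Hoffman computation is the right spirit, but applied to the wrong pair of sets, and the missing sparsity lemma is a genuine gap (quite possibly false as stated).

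For part (ii), the reduction ``long induced path $\Rightarrow$ small forcing number'' is correct, but demanding an induced path is far stronger than what is needed, and your greedy does not close. A $k$-witness only requires the non-adjacencies $s_i\nsim t_j$ for $i<j$; the $t_i$'s may be adjacent to each other and to earlier $s_j$'s arbitrarily. The paper exploits this: at step $i$ it chooses \emph{any} vertex $s_i$ of the surviving set $U_{i-1}$ whose degree inside $U_{i-1}$ is positive but at most $(d-\lambda)\frac{|U_{i-1}|}{n}+\lambda$ (such a vertex exists by Theorem~\ref{thm:ks:edgedist} as long as $|U_{i-1}|>\frac{\lambda n}{d+\lambda}$), takes $t_i$ to be a neighbour, deletes $N[s_i]$, and the recursion $a_{i+1}\ge a_i-\frac{d-\lambda}{n}a_i-\frac{\lambda+1}{n}$ yields $k\ge\frac{n}{2(d-\lambda)}\log\frac{d-\lambda}{2\lambda+1}$. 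Your path-greedy instead requires the \emph{already fixed} endpoint $v_k$ to have a neighbour in the shrinking set of vertices missing $N(v_0)\cup\dots\cup N(v_{k-1})$, a local condition that the mixing lemma cannot certify vertex-by-vertex once $\lambda\gtrsim\sqrt d$ (as you note), and your mixing-lemma bound on the candidate set is an upper bound, which argues that the process halts rather than that it continues. Whether every $\ndl$-graph even contains an induced path of the required length is itself a nontrivial open step, so as written part (ii) also has a genuine gap; the fix is to build a witness, not an induced path.
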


\noindent            
The first bound on  $n-Z(G)$ is tight, whereas (ii) is tight up to a multiplicative constant.
            
	The rest of this paper is organised as follows. The second section contains results on $H$-free graphs with a forbidden bipartite graph $H$. In the third section, we asymptotically determine the zero forcing number of $\gnp$. Section 4 contains bounds based on the spectral properties of a graph.

\section{Graphs with forbidden subgraphs}
	In this section, we bound the forcing number of graphs with a forbidden bipartite subgraph. First we consider graphs with large girth $g$ and minimum degree $\delta$, showing that 
      	\begin{enumerate}
            \item  $Z(G) \geq  e^{-1} \left(\frac{\delta^k}{k+1} - \delta^{k-1} \right) $ for $g=2k+1, k \in \mathbb{N}$, and
            \item  $Z(G) \geq 2e^{-1} \left(\frac{\delta^k}{k+1} - \delta^{k-1} \right) $ for $g=2k+2, k \in \mathbb{N}$.
        \end{enumerate}
	
    \begin{proof}[Proof of Theorem~\ref{thm:girth} ]
         We will use the result of Alon, Hoory and Linial~\cite{ahl}, which says that a graph $G_1$ of girth $g$ and average degree $d$ has to satisfy 
	\begin{equation} \label{eq:ahl}
		|V(G_1)| \geq
			 \begin{cases}
			 	(d-1)^k, &g = 2k+1 \\
			 	2(d-1)^k, &g = 2k+2.
			\end{cases}
	\end{equation}
	
		Let $G$ be an $n$-vertex graph with girth $g=2k+1$ and minimum degree $\delta$. The proof for the case of even girth is the same. Let  $S$ be a zero forcing set in $G$ of order $s$. Assume that $s \leq \frac {n}{(k+1)}$, otherwise we can apply \eqref{eq:ahl} to the entire graph $G$ and obtain
        $$s > \frac{n}{k+1} \geq \frac{(\delta-1)^k}{k+1} > e^{-1} \frac{\delta^k}{k+1},$$
        so we are done.
        Starting from $S$, we run the zero forcing process until we have reached a set of black vertices $T$ with $|T|= (k+1) s$.
                Let $U$ be the set of vertices which forced some vertex of $T \setminus S$ during our process. Then, since each vertex can force only one of its neighbours, $|U| \geq ks$. Moreover, all the edges with an endpoint in $U$ lie inside $T$. Denoting the number of edges with both endpoints in $T$ by $e(T)$, we have
                $$e(T) \geq \frac{|U|}{2}\cdot \delta \geq \frac{k|S|}{2}\cdot \delta .$$

                The graph $G[T]$ has average degree at least $\frac{k\delta}{k+1}$. Applying \eqref{eq:ahl} to the graph $G[T]$ gives 
                \begin{align*}
                    (k+1)s & \geq \left(\frac{k \delta}{k+1} -1 \right)^k = \delta^k \left(\frac{k}{k+1} \right) ^k
                        \left(1 - \frac{k+1}{k\delta} \right)^k 
                        \geq  \delta^k e^{-1} \left(1- \frac{k(k+1)}{k\delta} \right) \\
                    s &\geq e^{-1 } \left(\frac {\delta^k}{k+1} - \delta^{k-1} \right),
                \end{align*} 
                as required. For the second inequality, we used
                $\left(\frac{k}{k+1} \right)^k \geq e^{-1}$ and $(1- \alpha)^k \geq 1 - k\alpha$ for $\alpha <1, k \in \mathbb{N}$.
                
    \end{proof}
        It is worth mentioning that already for rather small values of $\delta$, our lower bound exceeds the  value $\delta + (\delta-2)(g-3)$ conjectured in~\cite{dk}. Even for girth five, by taking $k=2$, we obtain $Z(G) \geq \frac 13 \left( \frac{2\delta}{3}-1 \right)^2$, which implies the conjecture of Davila and Kenter for $\delta \geq 22$.
        
	The previous approach will now be used to establish a bound which applies to $H$-free graphs for any bipartite graph $H$. We do not try to optimise the constants in this proof.

    \begin{proof}[Proof of Theorem~\ref{thm:h-free}]
        It will be useful to rearrange our hypothesis on the Tur\'an number of $H$, which is that for $n  \geq n_0$, $\turan(n, H) \leq \beta_H n^{1+c_H}$. Suppose that $G_1$ is an $H$-free graph with average degree $d \geq n_0$. In particular, $G_1$ has at least $n_0$ vertices.  Denoting $n_1 = |V(G_1)|$, the hypothesis gives
        \begin{align*}
             \frac 12 d n_1 &= e(G_1) \leq \beta_H n_1 ^{1+c_H} \\
             \left( \frac{d}{2 \beta_H} \right)^\frac{1}{c_H} &\leq n_1.
        \end{align*}
        The proof reduces to the following claim, which we state formally because we will use it to prove Corollary~\ref{cor:kst}.
     
    	\begin{CLAIM} \label{claim:h-free}
    	Suppose that any $H$-free graph $G_1$ with average degree $d \geq n_0$ satisfies
        \begin{equation} \label{eq:turan}
            |V(G_1)| \geq \left( \frac{d}{2 \beta_H} \right)^\frac{1}{c_H} .
        \end{equation}
    		 Let $G$ be an $H$-free graph with minimum degree $\delta \geq 2n_0$. Then
        	$ \displaystyle Z(G) \geq \frac 12 \left( \frac{\delta}{4 \beta_H} \right)^\frac{1}{c_H} .$
    	\end{CLAIM}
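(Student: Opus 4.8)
The plan is to run the zero forcing process from a minimum forcing set for a bounded number of steps and feed the resulting subgraph into \eqref{eq:turan}, exactly as in the proof of Theorem~\ref{thm:girth} but with the ``radius'' $k$ replaced by $1$ (so that we stop once the black set has doubled) and with \eqref{eq:turan} in place of \eqref{eq:ahl}. Write $n=|V(G)|$, let $S$ be a minimum zero forcing set, and set $s:=|S|=Z(G)$.

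First I would dispose of the trivial case $2s\ge n$: here $G$ itself is an $H$-free graph with average degree at least $\delta\ge 2n_0\ge n_0$, so \eqref{eq:turan} applied to $G$ gives $n\ge\bigl(\tfrac{\delta}{2\beta_H}\bigr)^{1/c_H}$, and hence $s\ge n/2\ge\tfrac12\bigl(\tfrac{\delta}{2\beta_H}\bigr)^{1/c_H}\ge\tfrac12\bigl(\tfrac{\delta}{4\beta_H}\bigr)^{1/c_H}$. So from now on $2s<n$.

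Assuming $2s<n$, run the forcing process from $S$ and stop at the first moment the black set $T$ has exactly $|T|=2s$ vertices; this is possible since $S$ is forcing, so its closure is all of $V$, which has more than $2s$ vertices. Let $U$ be the set of vertices that performed a force during this truncated run. Since a vertex forces at most once (after forcing its unique white neighbour it has no white neighbour left) and every vertex of $T\setminus S$ was forced exactly once, the map sending a vertex of $T\setminus S$ to the vertex that forced it is a bijection onto $U$; hence $|U|=|T\setminus S|=s$. Moreover, at the step a vertex $v\in U$ forces, all neighbours of $v$ become black, so every edge of $G$ incident with $U$ has both ends in $T$. Counting edges of $G[T]$, $e(G[T])\ge\tfrac12\sum_{v\in U}\deg_G(v)\ge\tfrac12|U|\delta=\tfrac{s\delta}{2}$, so the average degree of $G[T]$ is $\tfrac{2e(G[T])}{|T|}\ge\tfrac{\delta}{2}\ge n_0$. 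Now $G[T]$ is $H$-free with average degree at least $\delta/2\ge n_0$, so by \eqref{eq:turan} (applied with parameter $d=\delta/2$, using that its right-hand side is increasing in $d$) we get $2s=|V(G[T])|\ge\bigl(\tfrac{\delta/2}{2\beta_H}\bigr)^{1/c_H}=\bigl(\tfrac{\delta}{4\beta_H}\bigr)^{1/c_H}$, i.e. $Z(G)=s\ge\tfrac12\bigl(\tfrac{\delta}{4\beta_H}\bigr)^{1/c_H}$, as claimed.

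Given the machinery already set up, no step is a serious obstacle; the only points needing care are the bookkeeping that every neighbour of a forcing vertex lies in the truncated black set $T$ (which is exactly where the precise definition of a force and the choice of stopping time are used), and the monotonicity check that lets us substitute the lower bound $\delta/2$ for the average degree of $G[T]$ into \eqref{eq:turan}. Both are routine.
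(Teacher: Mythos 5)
Your proof is correct and follows essentially the same route as the paper's: handle the case $2s\ge n$ by applying \eqref{eq:turan} to $G$ itself, otherwise truncate the forcing process at $|T|=2s$, observe that the $s$ forcing vertices $U$ have all their edges inside $T$, deduce that $G[T]$ has average degree at least $\delta/2\ge n_0$, and apply \eqref{eq:turan} to $G[T]$. The only difference is that you spell out a few details the paper leaves implicit (the bijection giving $|U|=s$ and the monotonicity in $d$ of the bound), which is fine.
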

         To see this, let $S$ be a zero forcing set in $G$ of order $s$. Assume that $s < \frac n2$, since otherwise we can apply \eqref{eq:turan} to the entire graph $G$ to get
         $$s \geq \frac n2 \geq  \frac 12 \left( \frac{\delta}{2 \beta_H} \right)^\frac{1}{c_H}  \geq \frac 12 \left( \frac{\delta}{4 \beta_H} \right)^\frac{1}{c_H}.$$ 
          Starting from $S$, we run the zero forcing process until we have reached a set of black vertices $T$ with $|T|= 2s$. Let $U$ be the set of vertices which forced some vertex of $T \setminus S$ during our process. Since each vertex can force only one of its neighbours, $|U| = s$. Moreover, all the edges with an endpoint in $U$ lie inside $T$. Hence
                $$e(T) \geq \frac{|U|}{2}\cdot \delta = \frac{s}{2} \delta.$$
                We conclude that the average degree in $G[T]$ is at least $\frac {\delta}{2}$. Now we can apply \eqref{eq:turan} to $G[T]$.
                \begin{align*}
                    |T| = 2s &\geq \left( \frac{\delta}{4 \beta_H} \right)^\frac{1}{c_H},
                \end{align*}
                as required.
    \end{proof}
    
        Recall that $K_{a, b}$ denotes the complete bipartite graph with parts of order $a$ and $b$. Using the well-known result of~\cite{kst}, we give explicit bounds for $K_{a, b}$-free graphs.
    \begin{COR} \label{cor:kst}
        Let $G$ be a $K_{a, b}$-free graph with minimum degree $\delta \geq 4a-4$. Then 
        $$Z(G) \geq \frac 12 \left(\frac{\delta}{4(b-1)^{\frac 1a}} \right)^{\frac{a}{a-1}}.$$
    \end{COR}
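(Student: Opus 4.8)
The plan is to feed the explicit K\"ovari--S\'os--Tur\'an bound into Claim~\ref{claim:h-free}. Recall the standard quantitative form: for $a \le b$, every $K_{a,b}$-free graph on $n$ vertices has at most $\tfrac12\bigl((b-1)^{1/a}(n-a+1)n^{1-1/a} + (a-1)n\bigr)$ edges. Bounding $n-a+1 \le n$ turns this into the cruder estimate $\turan(n, K_{a,b}) \le \tfrac12 (b-1)^{1/a} n^{2-1/a} + \tfrac{a-1}{2}\,n$, which is of the shape needed for Claim~\ref{claim:h-free} with $\beta_H = (b-1)^{1/a}$ and $c_H = 1 - \tfrac1a$ (so $1/c_H = \tfrac{a}{a-1}$), provided the linear term can be absorbed. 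I would set the threshold to $n_0 = 2a-2$.

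Next I would verify hypothesis~\eqref{eq:turan} for these parameters. Let $G_1$ be $K_{a,b}$-free with average degree $d \ge n_0 = 2a-2$, and write $n_1 = |V(G_1)|$, so $n_1 \ge d+1 \ge 2a-1$. Since $a \ge 2$ one has $(a-1)^{1/(a-1)} < 2$, hence $(a-1)^{a/(a-1)} < 2(a-1) < n_1$; together with $(b-1)^{1/a} \ge 1$ (valid as $b \ge a \ge 2$) this rearranges to $a-1 \le (b-1)^{1/a} n_1^{1-1/a}$, and therefore $\tfrac{a-1}{2} n_1 \le \tfrac12 (b-1)^{1/a} n_1^{2-1/a}$. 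Substituting this into the crude Tur\'an estimate gives $e(G_1) \le (b-1)^{1/a} n_1^{2-1/a}$, and rearranging $e(G_1) = \tfrac12 d\, n_1$ exactly as in the proof of Theorem~\ref{thm:h-free} yields $n_1 \ge \bigl(d/(2(b-1)^{1/a})\bigr)^{a/(a-1)}$, which is precisely~\eqref{eq:turan}.

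Finally, since $\delta \ge 4a-4 = 2n_0$, Claim~\ref{claim:h-free} applies and delivers $Z(G) \ge \tfrac12 \bigl(\delta/(4\beta_H)\bigr)^{1/c_H} = \tfrac12\bigl(\delta/(4(b-1)^{1/a})\bigr)^{a/(a-1)}$, as claimed. The only delicate point is the bookkeeping in the second step: the threshold $n_0$ must be small enough that the corollary's hypothesis $\delta \ge 4a-4$ supplies the required $\delta \ge 2n_0$, yet large enough that the linear term $\tfrac{a-1}{2}n$ in the K\"ovari--S\'os--Tur\'an bound is dominated by the main term; the elementary inequality $(a-1)^{1/(a-1)} < 2$ is exactly what makes the choice $n_0 = 2a-2$ do both jobs. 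Everything else is a direct substitution into the machinery already in place, so I expect no deeper obstacle.
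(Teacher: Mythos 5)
Your proposal is correct and takes essentially the same route as the paper: both verify hypothesis~\eqref{eq:turan} from the K\"ovari--S\'os--Tur\'an bound with $\beta_H=(b-1)^{1/a}$, $c_H=\tfrac{a-1}{a}$, $n_0=2a-2$, and then apply Claim~\ref{claim:h-free}. The only difference is cosmetic: the paper absorbs the additive term $a-1$ by a short case distinction using $d\ge 2a-2$, whereas you dominate it via $n_1>2(a-1)\ge (a-1)^{a/(a-1)}$ and $(b-1)^{1/a}\ge 1$; both give $d\le 2(b-1)^{1/a}n_1^{1-1/a}$ and the same conclusion.
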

    \begin{proof} 
	The aforementioned theorem of K\"ovari, S\'os and Tur\'an~\cite{kst} 
    	states that for an $n$-vertex $K_{a, b}$-free graph with average degree $d$,	 
	\begin{equation} \label{eq:kst}
		d \leq (b-1)^{\frac 1a} n^{1- \frac 1a} + a-1.
	\end{equation}
	It follows that $K_{a, b} $ satisfies \eqref{eq:turan} with $n_0 = 2a-2$ and $c_H = \frac{a-1}{a}$. For, let $G_1$ be an $n$-vertex $K_{a,b}$-free graph with average degree $d \geq 2a-2$. If the second summand in \eqref{eq:kst} is strictly larger than the first one, then we have $d < 2a-2$, which is a contradiction. Therefore the first summand is larger, so
        \begin{align*}
            d &\leq 2(b-1)^{\frac 1a} n^{1 - \frac 1a}.
        \end{align*}
        Rearranging, we get $
            n \geq \left( \frac{d}{2 (b-1)^{\frac 1a}} \right)^{\frac{a}{a-1}}.$ Our bound on the zero forcing number of $K_{a, b}$-free graphs follows from Claim~\ref{claim:h-free}.
    \end{proof}
    
    \noindent
    Since for $b > (a-1)!$, there are constructions of $K_{a, b}$-free graphs with minimum degree $\delta$ and $O \left( \delta^{\frac{a}{a-1}} \right)$ vertices (see, e.g., 
  ~\cite{ars}), the result of Corollary~\ref{cor:kst} is tight up to a constant factor.

\section{The random graph}

	Here we prove that for $\frac{\log^2 n}{\sqrt{n}} \leq p = o(1)$, with high probability
     $$Z\left(\gnp \right) = n- \left(2+ \sqrt{2}+o(1) \right) \cdot \frac{\log (np)}{p}.$$
	Since for $p = o(1)$, $-\log (1-p) = (1+o(1))p$, this establishes the bound of  Theorem~\ref{thm:forcing_gnp}. Restricting to $p=o(1)$ in this section keeps the calculations clearer. The case of constant $p$ is easier (as we explain below) and can be proved similarly. 
    In what follows, we mostly omit floor and ceiling signs for the sake of clarity of presentation. The logarithms are in base $e$ unless stated otherwise. All the inequalities will hold for large enough $n$. In a graph $G$, we denote the set of edges between vertex sets $S$ and $T$ by $E(S, T)$. We write $u \sim v$ if the vertices $u$ and $v$ are adjacent in $G$, and $u \nsim v$ otherwise.
        
	Our approach combines the first and the second moment method, and is somewhat similar to the argument used to determine the independence number of $\gnp$ (see, e.g.~\cite[Section 7]{jlr}). However, the proof contains some delicate points, which we try to explain before giving the formal argument. We start this discussion by considering only the uniform model $\gn$.

	The zero forcing number of the random graph is governed  by the occurrence of a specific substructure called a witness. In a graph $G$ on the vertex set $V$, a {\em $k$-witness} (or a {\em witness} of order $k$) is a pair of ordered vertex $k$-tuples  $\left((s_i)_{i \in [k]}, (t_i)_{i \in [k]} \right)$ such that $s_i, t_i \in V$,  $s_i \sim t_i$ for each $i$, and $s_i \nsim t_j$  for $i< j$.    The definition requires $s_i \neq s_j$ for $ i \neq j$, but  it might happen that $s_i = t_j$ for some $i>j$. The adjacency matrix of a $k$-witness, where the rows and columns are indexed by $(s_i)$ and $(t_i)$ respectively, can be found in Figure~\ref{fig:witness_a}.
For any pair of $k$-tuples $\left((s_i)_{i \in [k]}, (t_i)_{i \in [k]} \right)$, we define the set of \emph{superdiagonal} pairs to be $\left \{ \left(s_i, t_j \right): i, j \in [k], i<j \right \}$, and the set of \emph{diagonal pairs} to be $\left\{ \left(s_i, t_i \right): i \in [k] \right \}$.
It is easy to see that if $G$ has a forcing set $S$ of order at most $n-k$, then it contains a $k$-witness (Lemma~\ref{lemma:witness}). Therefore, our aim is to find the order of the largest witness in $\gn$. The first subtlety arises in trying to use the first moment computation to guess the forcing number of $\gn$. Let $R_k$ be the random variable counting the $k$-witnesses in $\gn$. For fixed $k$-tuples $(s_i)$ and $(t_i)$, the probability that they form a witness is $2^{-\binom{k+1}{2}}$ since they determine $\binom{k+1}{2}$ superdiagonal and diagonal pairs. Using linearity of expectation, we can multiply this probability with the number of choices for $(s_i)$ and $(t_i)$ to obtain
            \begin{align*} 
                \er{R_k} 
				& \leq \left( \frac{n!}{(n-k)!} \right)^2 \cdot 2^{-\binom{k+1}{2}} 
                \leq  n^{2k} 2^{-\frac{k^2}{2}} =
                 \left ( n^2 2^{-\frac{k}{2}} \right)^k. \label{eq:expectationbound}
            \end{align*}

 For $k\geq (4+\epsilon) \log_2 n$, it holds that $\er{R_k} = o(1)$, so Markov's inequality implies that with high probability $\gn$ contains no  such $k$-witness. On the other hand, if $k \leq (4-\epsilon) \log_2 n$ the expected number of $k$-witnesses tends to infinity with $n$, so it is reasonable to take $(4+o(1)) \log_2 n$ as the first guess for the order of the largest witness.  For example, in finding the independence number of $\gnp$, the first moment computation gives the correct value throughout the range of $p$. However, in our case, the actual asymptotic order of a witness, $k_c := \left( 2+\sqrt{2} \right)\log_2 n$, is smaller than $4 \log_2 n$. 

The reason for this is that a substructure of a $k$-witness $\left((s_i)_{i \in [k]}, (t_i)_{i \in
    [k]} \right)$ obtained by discarding a final segment of $(s_i)$ and an initial segment of
$(t_i)$ has a lower expected number of copies inside $\gn$ than the witness itself, and therefore
gives a stronger bound on $k$. The correct asymptotic value is obtained by taking \tk{$\ell  =\ell(k)=
\tfrac{\sqrt 2}{2}k$}, and counting substructures called $k$-subwitnesses. The adjacency matrix of a subwitness is depicted \tk{in} Figure~\ref{fig:witness_b}.
 For integers $a \leq b$, define the interval $[a \tdots b] = \{a, a+1, \dots, b \}$. A {\em k-subwitness} in $G$ is a pair of $\ell$-tuples
        $\left((s'_i)_{i \in [\ell]}, (t'_i)_{i \in [k-\ell +1 \tdots k]} \right)$ such that $s'_i, t'_j \in V$, $s'_i \sim t'_i$ for each $i \in [k- \ell + 1 \tdots \ell]$, and $s'_i \nsim t'_j$  for $ 1 \leq i< j \leq k$.  
  Clearly, if $\left((s_i)_{i \in [k]}, (t_i)_{i \in [k]} \right)$ is a witness in $G$, then the restrictions $(s_{1}, \dots s_{\ell})$ and $(t_{k-\ell+1}, \dots, t_k)$ form a subwitness.  
           
In Lemma~\ref{lemma:gnp_fmm}, we will count $k$-subwitnesses for $k = (1+\epsilon)k_c = (1+\epsilon) \left(2+\sqrt{2} \right)\log_2 n$, and show that with high probability, $\gn$ contains no such $k$-subwitness. An intuition on why we can discard those segments of $(s_i)$ and $(t_i)$ is that given a $(1-\epsilon)k_c$-subwitness in $\gn$, we can extend it to a $(1-\epsilon)k_c$-witness with high probability.
  Indeed, to find the remaining columns $t_1, \dots, t_{k-\ell}$, we only need to consider their adjacencies with $s_1, \dots, s_{k-\ell}$. But $k-\ell = (1-\epsilon) \log_2 n$, and therefore a short computation shows that for any choice of $s_1, \dots, s_{k-\ell}$, with high probability, there will be at least $n^{\frac{\epsilon}{2}}$ vertices in $\gn$ satisfying any adjacency restriction with $s_1, \dots, s_{k-\ell}$.  Of course, the same property of $\gn$ implies that we can find the missing rows $s_{\ell +1}, \dots, s_k$.

\begin{figure}[bht]
	\centering
    \begin{subfigure}[b]{0.33\textwidth}
		\raisebox{-6cm}{\includegraphics[scale=.36]{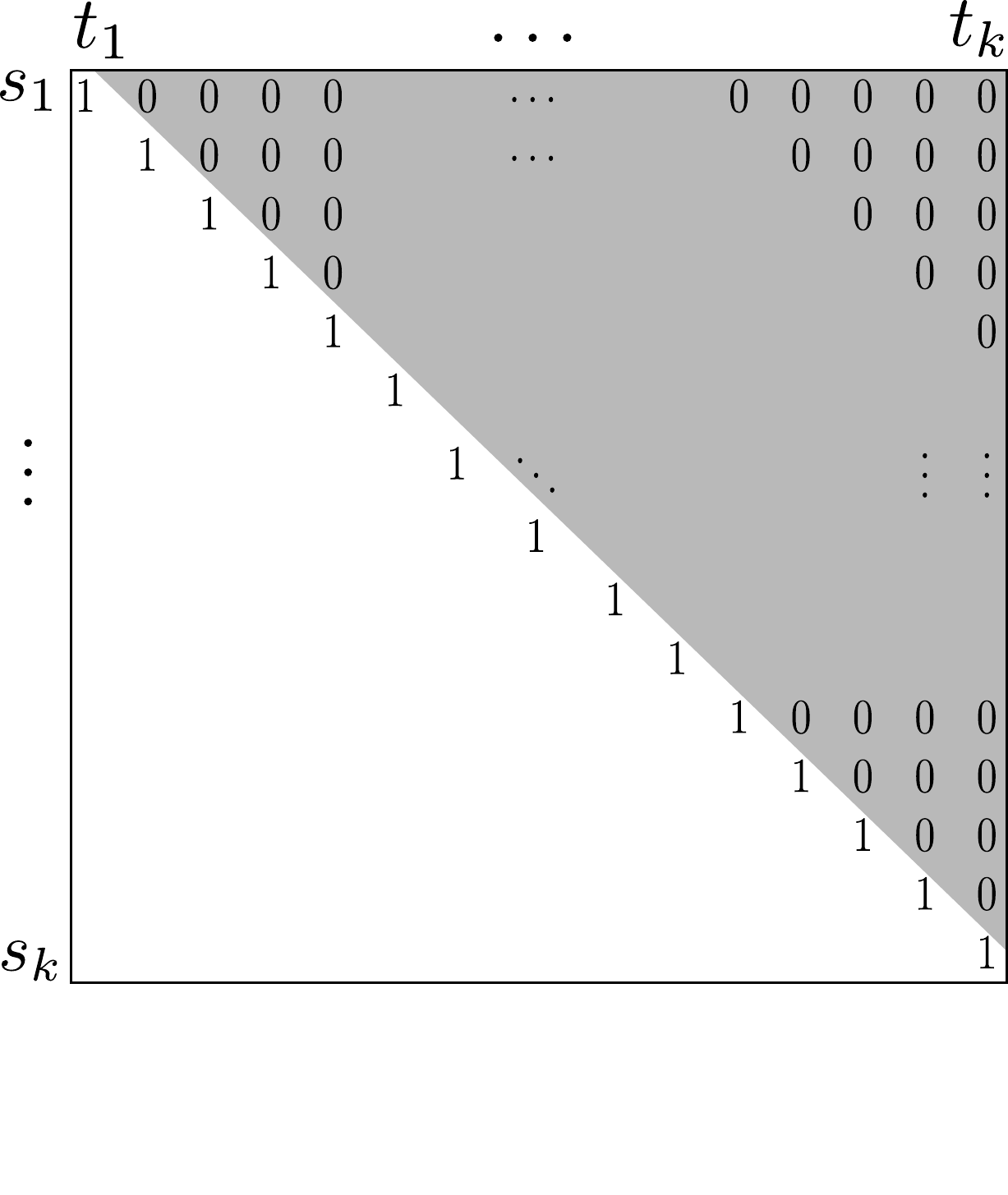}}
        \caption{}
        \label{fig:witness_a}
    \end{subfigure}
	\hspace{-20pt}
    \begin{subfigure}[b]{0.33\textwidth}
		\raisebox{-2cm}{\includegraphics[scale=.36]{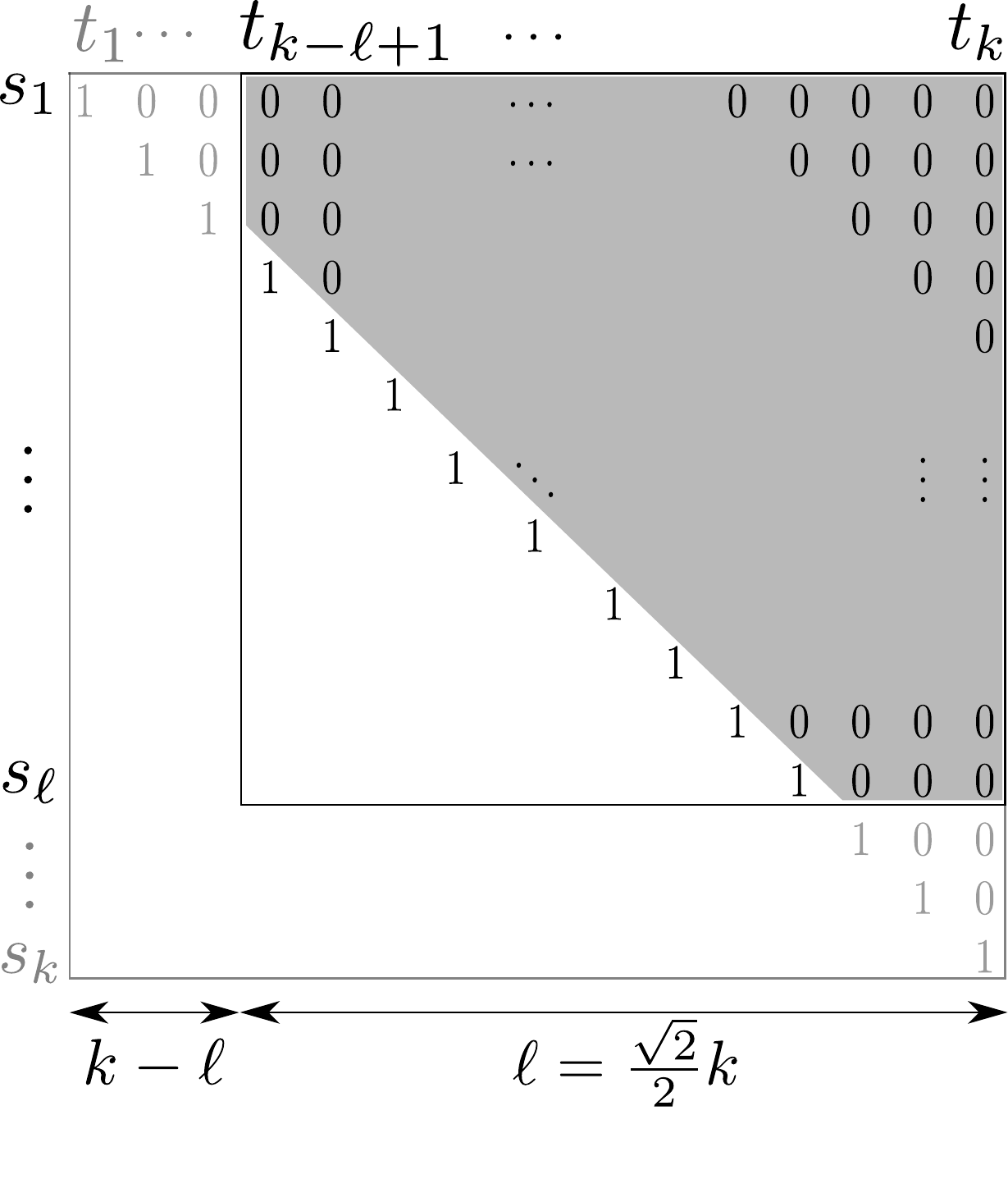}}
        \caption{}
        \label{fig:witness_b}
    \end{subfigure}
	\hspace{-25pt}
    \begin{subfigure}[b]{0.33\textwidth}
		\raisebox{0cm}{\includegraphics[scale=.36]{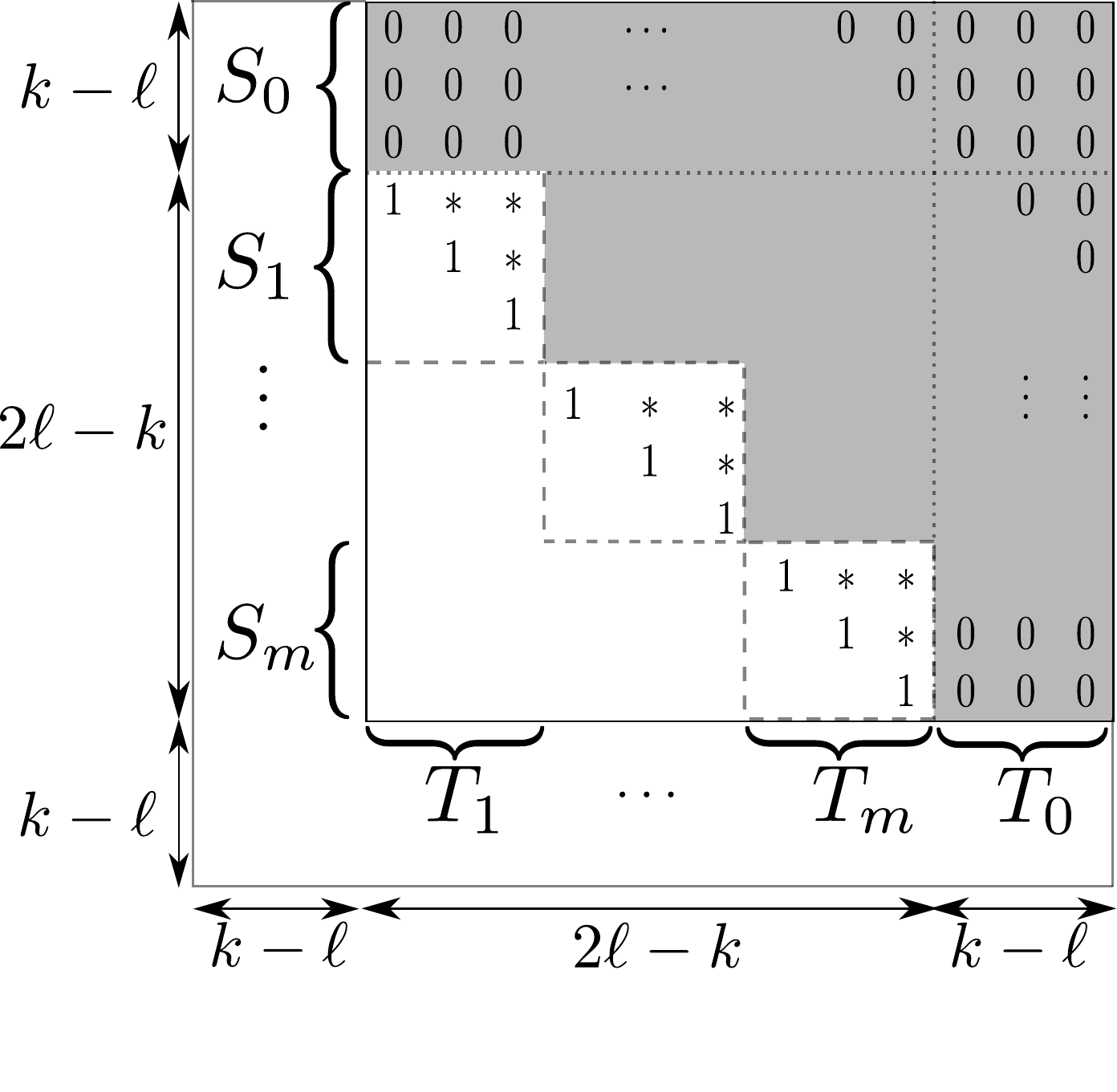}}
        \caption{}
        \label{fig:witness_c}
    \end{subfigure}
	\caption{{Adjacency matrices of a {\em k}-witness, {\em k}-subwitness and a loose {\em k}-subwitness respectively. The regions which are required to contain only zeros are shaded. In Figure (c), the stars mark the entries which are superdiagonal in this particular ordering of the vertices, but not required to contain zeros.}}
\end{figure}

This intuition gives the correct answer for $\gn$, but when $p=o(1)$ the computation of the expected number of  $k$-subwitnesses contains another caveat. In a subwitness, $(s'_i)_{i \in [\ell]}$ and $(t'_i)_{i \in [k-\ell +1 \tdots k]}$ are orderings of the corresponding vertex sets. In the first moment computation, the ordering contributes a factor of $(\ell!)^2 = k^{2\ell + o (k)}$.
This factor was negligible when $p = \frac 12$ and $k_c = \Theta(\log n)$, but for $p=o(1)$, $\gnp$ contains witnesses whose order is polynomial in $n$, so we need to be more careful. Next, we explain how to shave off  a factor of $k^{k+o(k)}$. A subwitness is modified so that the adjacency matrix is invariant under reordering large subsets of the vertices, at the price of discarding a small number of its zero entries. Figure~\ref{fig:witness_c} illustrates this tradeoff.

In a graph $G$, we define a \emph{loose $k$-subwitness} (or just \emph{loose subwitness}) to be a substructure labelled by sets $S_0, S_1, \dots S_m, T_0, T_1, \dots, T_m \subs V$ and bijections $f_i: S_i \to T_i$ for $i = 1, \dots m$, where $m =(2 \ell-k)p = \left( \sqrt{2}-1 \right)kp$ and the following conditions are satisfied. 
\begin{enumerate}
\item The sets $S_i$ are pairwise disjoint for $i = 0, \dots, m$, as well as the sets
  $T_i$. Denoting $S = \bigcup_{i=0}^m S_i$ and $T = \bigcup_{i=0}^m T_i$, we have
  $|S| = |T| = \ell$ and $|S_0|= k - \ell = \left(1 - \frac{\sqrt{2}}{2} \right)k$.
  The sets $S \setminus S_0$ and $T \setminus T_0$ are partitioned \emph{equitably} into $S_i$, and
  $T_i$, that is, $ 0\leq |S_i|-|S_j| \leq 1 $ for $1 \leq i < j \leq m$.  Therefore,
  $ \frac 1p -1 \leq |S_i| = |T_i| \leq \frac 1p +1 $ and the orders of $S_i$ are non-\tk{in}creasing for
  $i \in [m]$.
\item For the edges between $S$ and $T$, we require $E(S_0, T) = E(T_0, S) = \emptyset$, and
  $E(S_i, T_j) = \emptyset$ for $i<j$. For each bijection $f_i$ and $v \in S_i$, there is an edge in
  $G$ between $v$ and $f_i(v)$. In other words, the bijections $f_i$ determine a matching between
  $S_i$ and $T_i$.
\end{enumerate}

 The key point is that any graph that contains a $k$-witness, also contains a loose $k$-subwitness, which we use to get an improved bound. Since the sets $S_i$ and $T_i$ are not ordered, but only paired by the bijections $f_i$ for $i \in [m]$, we gain a factor of $k^{-k +o(k)}$ in the first moment computation. On the other hand, for $i \in [m]$,  we do not care about the adjacency relation between $S_i$ and $T_i$  apart from the diagonal vertex pairs, but that costs us a much less significant factor $ (1-p)^{|S_i|^2m} = (1-p)^{ (2\ell -k)/p}$. 
In words, the definition of a loose subwitness allows a large number of vertex permutations. However, since those permutations preserve most of the superdiagonal pairs, the adjacency matrix of a loose subwitness still contains most of the zeros which were previously required.

 Estimating the expected number of loose subwitnesses in $\gnp$, we can match the bound obtained from the second moment method (Lemma~\ref{lemma:gnp_smm}). This computation requires some additional understanding of how two witnesses can interact, but the ideas are explicit in the argument.
We are now ready to present\ the formal proof. We first establish the relationship between the zero forcing number of a graph and the occurrence of a witness. We will be using the shortened notation $\mb s = (s_i)_{i \in [k]}$, and denote the image of this $k$-tuple by $\mb s [k] =  \{s_i: i \in [k]\}$. 
   
    \begin{LEMMA} \label{lemma:witness}
        Let $G$ be an $n$-vertex graph. If $Z(G) \leq n-k$, then $G$ contains a $k$-witness. Moreover, if $G$ contains a $k$-witness $\left( \mb s, \mb t \right)$ with $\mb s [k]  \cap \mb t [k]  = \emptyset$, then $Z(G) \leq n-k$.
        
    \end{LEMMA}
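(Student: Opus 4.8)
The plan is to connect the forcing process to the combinatorial structure of a witness by tracking \emph{which vertex forces which} and \emph{in what order}. For the first implication, suppose $Z(G) \leq n-k$ and fix a forcing set $S_0$ with $|S_0| \leq n-k$. Run the forcing process and record the sequence of forces: each force is a step in which a black vertex $s$ forces a white vertex $t$ (its unique white neighbour at that time). Since we start from $|S_0| \leq n-k$ black vertices and end with all $n$ black, at least $k$ forces occur; let $(s_1, t_1), (s_2, t_2), \dots, (s_k, t_k)$ be the first $k$ of them, listed in the order the forces happen. I would set $\mb s = (s_i)_{i\in[k]}$ and $\mb t = (t_i)_{i\in[k]}$ and check these form a $k$-witness. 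The $s_i$ are distinct because each vertex forces at most once (once it forces, it has no white neighbours, hence can never again be a forcing vertex — or more simply, a vertex forces the unique white vertex in its neighbourhood, and can do so only at the single moment this happens). We have $s_i \sim t_i$ by definition of a force. For the key condition $s_i \nsim t_j$ when $i < j$: at the moment $s_i$ forces $t_i$, the vertex $t_i$ is $s_i$'s \emph{unique} white neighbour, so every other neighbour of $s_i$ is already black; but $t_j$ with $j > i$ is forced strictly later, hence is still white at the moment of the $i$-th force, hence cannot be a neighbour of $s_i$. This gives $s_i \nsim t_j$ for $i<j$, as required.

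For the converse, suppose $G$ contains a $k$-witness $(\mb s, \mb t)$ with $\mb s[k] \cap \mb t[k] = \emptyset$. I claim $B_0 := V \setminus \mb t[k]$ is a forcing set; since $|\mb t[k]| = k$ (the $t_i$ are distinct because $t_j \sim s_j$ but $t_j \nsim s_i$ for $i<j$ forces $t_i \neq t_j$), this gives $|B_0| = n-k$ and hence $Z(G) \leq n-k$. The idea is to force $t_1, t_2, \dots, t_k$ in that order. Suppose inductively that $t_1, \dots, t_{i-1}$ have been forced, so the current black set is $B_0 \cup \{t_1, \dots, t_{i-1}\}$. I want $s_i$ to force $t_i$ now, which requires $t_i$ to be the unique white neighbour of $s_i$. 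First, $s_i$ is black: $s_i \in \mb s[k]$ and $\mb s[k] \cap \mb t[k] = \emptyset$, so $s_i \in B_0$. The white vertices at this stage are exactly $\{t_i, t_{i+1}, \dots, t_k\}$. Among these, $s_i \sim t_i$, and $s_i \nsim t_j$ for all $j > i$ by the witness condition. Hence $t_i$ is the unique white neighbour of $s_i$, so the force is legal. After $k$ such forces every vertex is black, proving $B_0$ is forcing.

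The only genuinely delicate point is the bookkeeping in the forward direction: one must be careful that the list of forces is well defined (it depends on a choice of ordering when several forces are simultaneously available, but any fixed linearization works) and that "each vertex forces at most once" is argued cleanly, since the witness definition permits $s_i = t_j$ for $i > j$ — a vertex that was itself forced may later become a forcing vertex, so the claim is about the \emph{forcing role}, not about disjointness of $\mb s[k]$ and $\mb t[k]$. Everything else is a direct unwinding of definitions, and I expect the write-up to be short.
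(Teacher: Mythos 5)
Your proof is correct and follows essentially the same route as the paper: in the forward direction you index the forced vertices by the order of forcing and take the vertices that performed the forces, and in the converse you show $V(G)\setminus \mb t[k]$ is a forcing set by forcing $t_1,\dots,t_k$ in order via $s_1,\dots,s_k$. Your write-up merely spells out details (distinctness of the $s_i$, the linearization of simultaneous forces, the induction) that the paper leaves implicit.
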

        \begin{proof}
            Assume that $Z(G) \leq n-k$, that is, $G$ has a forcing set $S$ with $n-|S| \geq k$. Index the vertices of $V(G) \setminus S$ according to the order in which they  were forced, so $t_1$ is the first forced vertex, $t_2$ the second, and so on, up to $t_k$. For $1 \leq i \leq k$, let $s_i$ be a vertex which forced $t_i$. Then by the definition of a zero forcing set, $\left((s_i)_{i \in [k]}, (t_i)_{i \in [k]} \right)$ is a witness.
            
            Conversely, let  $\left( \mb s, \mb t \right)$ be a $k$-witness with $\mb s [k]  \cap \mb t [k]  = \emptyset$. Then $V(G) \setminus \mb t [k]  $ is a forcing set in $G$, since the vertices $t_1, \dots, t_k$ can be forced by the vertices $s_1, \dots, s_k$ respectively.
        \end{proof}           
        
        We now formalise the ideas outlined in the previous discussion.
 
    \begin{LEMMA} \label{lemma:gnp_fmm}
        Let $\frac{C}{n} < p(n) <1$ for a large constant $C$, and define $k = \frac{\left(2+ \sqrt{2} \right)}{p} \left(\log (np) + \log \log (np)\right)$. With high probability, $\gnp$ contains no $k$-witness, and therefore $Z \left( \gnp \right) \geq n-k$.
    \end{LEMMA}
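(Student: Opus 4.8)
The plan is to combine Lemma~\ref{lemma:witness} with the first moment method applied to \emph{loose} $k$-subwitnesses. By Lemma~\ref{lemma:witness}, if $\gnp$ contains no $k$-witness then $Z(\gnp)\ge n-k$, so it suffices to show that with high probability $\gnp$ has no $k$-witness. As noted before the statement, any $k$-witness $\left((s_i)_{i\in[k]},(t_i)_{i\in[k]}\right)$ contains a loose $k$-subwitness: put $S_0=\{s_1,\dots,s_{k-\ell}\}$, $T_0=\{t_{\ell+1},\dots,t_k\}$, cut the index interval $[k-\ell+1\tdots\ell]$ into $m$ consecutive blocks $B_1,\dots,B_m$ of the prescribed equitable sizes, and set $S_i=\{s_a:a\in B_i\}$, $T_i=\{t_a:a\in B_i\}$, $f_i(s_a)=t_a$; all the required non-edges and matching edges are inherited because consecutive blocks respect the linear order on indices. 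Hence it is enough to prove that $\er X\to 0$, where $X$ is the number of loose $k$-subwitnesses in $\gnp$, and then apply Markov's inequality.

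Writing $\er X=N\cdot q$, the number $N$ of combinatorial templates is
\[
N \;=\; \binom n\ell^2\binom{\ell}{k-\ell}^2\,\frac{\bigl((2\ell-k)!\bigr)^2}{\prod_{i=1}^m|S_i|!},
\]
where $\binom n\ell^2$ chooses $S$ and $T$, $\binom{\ell}{k-\ell}^2$ designates $S_0$ and $T_0$, the two multinomial coefficients $\tbinom{2\ell-k}{|S_1|,\dots,|S_m|}=\tfrac{(2\ell-k)!}{\prod|S_i|!}$ equitably split $S\setminus S_0$ and $T\setminus T_0$, and $\prod_i|S_i|!$ chooses the bijections $f_i$. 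A fixed template is a loose $k$-subwitness precisely when its $2\ell-k$ matching edges are present and the pairs in $E(S_0,T)$, $E(T_0,S\setminus S_0)$ and $\bigcup_{1\le i<j\le m}E(S_i,T_j)$ are absent; when $S\cap T=\emptyset$ all of these constraints lie on distinct pairs and are independent, so
\[
q \;=\; p^{\,2\ell-k}\,(1-p)^{Z},\qquad Z=(k-\ell)\ell+(k-\ell)(2\ell-k)+\sum_{1\le i<j\le m}|S_i||S_j|.
\]

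Now one substitutes $\ell=\tfrac{\sqrt2}{2}k$, $m=(\sqrt2-1)kp$, $|S_i|=\tfrac1p+O(1)$ and takes logarithms. Two algebraic facts drive the cancellation. First, by equitability $\sum_{i<j}|S_i||S_j|=\tfrac12\bigl((2\ell-k)^2-\sum|S_i|^2\bigr)=\tfrac{(2\ell-k)^2}{2}\bigl(1-\tfrac1m\bigr)+O(k)$, and a short computation gives $Z=(\sqrt2-1)k^2+O(k/p)$, whence $Z\log(1-p)\le-(\sqrt2-1)k^2p+O(k)$. Second, $(\sqrt2-1)(2+\sqrt2)=\sqrt2$, so with $k=\tfrac{2+\sqrt2}{p}\bigl(\log(np)+\log\log(np)\bigr)$ one gets $(\sqrt2-1)k^2p=\sqrt2\,k\bigl(\log(np)+\log\log(np)\bigr)$. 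On the counting side, $2\log\binom n\ell=\sqrt2\,k\log(np)-\sqrt2\,k\log\log(np)+O(k)$ when $\ell=o(n)$ (the range $\ell=\Theta(n)$, which only occurs when $p=\Theta(1/n)$, gives an even smaller count and is treated separately), $2\log\binom{\ell}{k-\ell}=O(k)$, and the factorial terms contribute $2\log\bigl((2\ell-k)!\bigr)-\log\prod|S_i|!=2(\sqrt2-1)k\log\log(np)+(\sqrt2-1)k\log\tfrac1p+O(k)+o\bigl(k\log\log(np)\bigr)$ — this last quantity is the promised gain of roughly a factor $k^{-k}$ over treating $S_i,T_i$ as ordered tuples. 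Adding everything, the $k\log(np)$ and $k\log\tfrac1p$ contributions cancel exactly and what survives is
\[
\log\er X \;\le\; -\,2\,k\log\log(np)\;+\;o\bigl(k\log\log(np)\bigr)\;+\;O(k)\;\longrightarrow\;-\infty
\]
for $C$ large enough, which completes the proof via Markov's inequality.

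The substance of the argument is entirely in the lower-order terms. The leading balance — $\sqrt2\,k\log n$ from $\binom n\ell^2$ against $(\sqrt2-1)k^2p$ from the forbidden non-edges — only pins down $k$ to within a multiplicative $1+o(1)$; the hard part is the careful accounting of the factorial/multinomial factors, which must show that the advantage of leaving the pieces $S_i,T_i$ unordered strictly outweighs the price $(1-p)^{-\Theta((2\ell-k)/p)}$ of having dropped the non-edge requirements inside $S_i\times T_i$, with all errors held to $o\bigl(k\log\log(np)\bigr)$ uniformly over $\tfrac Cn<p<1$. A further technical point is to justify $q\le p^{2\ell-k}(1-p)^Z$ in general: when $S\cap T\ne\emptyset$ a pair of vertices may be constrained by two of the conditions, and one must check that templates with $|S\cap T|$ large contribute negligibly. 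The $+\log\log(np)$ summand in the definition of $k$ is exactly what manufactures the $\Theta\bigl(k\log\log(np)\bigr)$ slack needed to absorb all of these errors.
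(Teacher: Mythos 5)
Your proposal is correct and follows essentially the same route as the paper's proof: reduce a $k$-witness to a loose $k$-subwitness with $\ell=\tfrac{\sqrt2}{2}k$ and $m=(\sqrt2-1)kp$, bound the expected number of loose subwitnesses by a first moment computation in which the multinomial/unordering gain beats the $(1-p)^{-\Theta(r/p)}$ loss, and let the $\log\log(np)$ term in $k$ supply the slack that makes the expectation vanish, finishing with Markov. The only differences are cosmetic bookkeeping (exact multinomial counting on a log scale versus the paper's binomial bounds), and the $S\cap T\neq\emptyset$ coincidence issue you flag is likewise left implicit in the paper's argument.
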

    
    \begin{proof}
        For a graph $G$ on the vertex set $V$, a $k$-witness and a loose $k$-subwitness have been defined in the previous section. We set $k = \frac{\left(2+ \sqrt{2} \right)}{p} \left(\log (np) + \log \log (np)\right)$ and $\ell = \frac{\sqrt{2}}{2}k$, $r = 2\ell -k=  \frac{\left(\sqrt{2} +o(1) \right)}{p} \log (np)$, $m = pr = \left(\sqrt{2} +o(1) \right) \log (np)$. The crucial fact is that if $G$ contains a witness $\left((s_i)_{i \in [k]}, (t_i)_{i \in [k]} \right)$, then a loose subwitness can be found as follows. $S_0$ consists of the first $k - \ell$ rows of the witness, $S_0:= \{s_1, \dots, s_{k-\ell} \}$, and $T_0$ of the last $k - \ell$ columns, $T_0:= \{t_{\ell+1}, \dots, t_k \}$.  The sets $S_1, \dots, S_m$ are constructed by ordering the vertices $s_{k-\ell+1}, s_{k-\ell+2}, \dots, s_{\ell}$ and partitioning them into $m$ equitable intervals. Naturally, $T_1, \dots, T_m$ are the corresponding columns, $T_j = \{t_i: s_i \in S_j \}$, and the bijections $f_j$ map $s_i$ to $t_i$ for $i \in [k-\ell +1 \tdots \ell]$.

        Let $Y_k$ denote the number of loose $k$-subwitnesses in $\gnp$. Our aim is to show
        $\er{Y_k}\tk{\to 0}$. Fix the sets $S_j, T_j$ and bijections $f_j$  which satisfy (i). In particular, $ |S_1 \cup \dots \cup S_m| = |T_1 \cup \dots \cup T_m| = 2\ell - k = r$  and $|S_i | = |T_i| \in \left[\frac 1p -1, \frac 1p+1 \right]$ for $i \in [m]$.   For $S_j$, $T_j$ and $f_j$ to span a loose subwitness, i.e.~for (ii) to be satisfied, we require
        $ \left(\ell^2 - r ^2 \right) +  \left( \binom{r }{2}-m \cdot \binom{r /m}{2} \right)$
        pairs to be non-edges in $\gnp$. The first summand, $\ell^2 - r^2 = 2(k-\ell)\ell - (k-\ell)^2,$ comes from $E(S_0, T) = E(T_0, S) = \emptyset$, and the second from  $E(S_i, T_j) = \emptyset$ for $i<j$. The $m \cdot \binom{r /m}{2}$ pairs have been subtracted since we do not impose any restrictions on $E(S_i, T_i)$, and they will turn out to be negligible.
        Moreover, (ii) requires $r$ diagonal edges to be present in $\gnp$, so the probability that our fixed $S_j, T_j$ and bijections $f_j$ satisfy (ii) is at most
        $$p^r (1-p)^{\ell^2-\frac{r^2}{2} - \frac{r^2}{\tk{2m}}}.$$
        
        Now we will take the union bound over all the potential loose subwitnesses. There are at most $\binom{n}{\ell-r}^2$ choices for $S_0, T_0$, and  $\binom{n}{\frac rm }^m$ choices for the sets $S_j$, $j \in [m]$. Each vertex $v \in S_j$ gets assigned a \tk{vertex} $f_j(v)$, which can be done in at most $n^r$ ways. This assignment also determines the sets $T_j = f_j(S_j)$ for $j \in [m]$.

        \begin{align*}
            \er{Y_k} & \leq \binom{n}{\ell-r}^2 \binom{n}{\frac rm }^m n^r p^r (1-p)^{\ell^2-\frac{r^2}{2} - \frac{r^2}{\tk{2m}}} \leq \left(\frac{en}{\ell-r}  \right)^{2\ell-2r}\left( \frac{enm}{r} \right)^r n^r p^r e^{-p \left(\ell^2-\frac{r^2}{2} - \frac{r^2}{\tk{2m}} \right)}.
        \end{align*}
            We use the inequalities $\ell-r \geq \frac k4$ and $r \geq \frac k4$ to obtain
        \begin{align*}
            \er{Y_k} & \leq \left( \frac{4en}{k} \right)^{2\ell -2r} \left( \frac{4en}{k} \right)^{r}m^rn^rp^r e^{-p \left(\ell^2-\frac{r^2}{2} - \frac{r^2}{\tk{2m}} \right)} 
             \leq C_1^k (np)^{2\ell}(kp)^{-2 \ell +r} m^r  e^{-p \left(\ell^2-\frac{r^2}{2} \right)},
        \end{align*}
        where $C_1\tk{=4e^2}$. 
        For the second inequality, we used $m = pr$, so that
            $e^{\frac{pr^2}{\tk{2m}}} \tk{=}e^{\tk{r/2}} \leq e^k$. Finally, \tk{substituting} $\ell =
             \frac{\sqrt{2}}{2}k$,  $\ell^2 - \frac{r^2}{2}= (\sqrt{2}-1)k^2$, and noting that $(kp)^{-2\ell+r} m^r \leq
              (kp)^{-2 \ell+2r}<1$, we obtain
        \begin{align*}
            \er{Y_k} < C_1^k \left( np \right)^{\sqrt{2}k}e^{-pk^2 \left( \sqrt{2}-1\right)}.
        \end{align*}
            Taking $np$ sufficiently large and recalling that $pk = \left(2+\sqrt{2} \right) \left(\log(np)+ \log \log (np) \right)$, we get $\er{Y_k} < 2^{-k} $.
             With high probability, $\gnp$ contains no loose $k$-subwitnesses, and therefore no $k$-witnesses. By Lemma~\ref{lemma:witness}, $Z(G) \geq n-k$ with high probability.
    \end{proof}
        
	To show that with high probability, $\gnp$ contains a $k$-witness $(\mb s,  \mb t)$ with $ \mb s[k] \cap \mb t[k]=\emptyset$, we use a well-known consequence of Chebyshev's inequality.   
Let $X_n$ be a sequence of non-negative random  variables indexed by some parameter $n$ going to infinity. If $\er{X_n} \rightarrow \infty$ and $\frac{\var{X_n}}{\left(\er{X_n} \right)^2} \rightarrow 0$, then with high probability $X_n>0$. The proof can be found for example in~\cite[Corollary 4.3.2]{alonspencer}. 

    \begin{LEMMA} \label{lemma:gnp_smm}
        Let $p = p(n)$ satisfy $ \frac{\log^2 n}{\sqrt{n}} < p = o(1)$ and $\epsilon > 0$. With high probability
        $$Z\left(\gnp \right) \leq n- \left( 1- \epsilon \right) \cdot \frac{\left(2+ \sqrt{2} \right) \log (np)}{p} .$$
    \end{LEMMA}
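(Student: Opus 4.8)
The plan is to apply the second moment method to the number $X$ of $k$-subwitnesses of $\gnp$ (as defined earlier in this section) whose $2\ell$ vertices are all distinct, where $k=(1-\epsilon)(2+\sqrt2)\log(np)/p$ and $\ell=\tfrac{\sqrt2}{2}k$. Once $X>0$ with high probability, it remains to extend the resulting subwitness to a $k$-witness $(\mb s,\mb t)$ with $\mb s[k]\cap\mb t[k]=\emptyset$, and Lemma~\ref{lemma:witness} then gives $Z(\gnp)\le n-k$, which is the assertion. To keep the extension transparent I would fix at the outset a partition $V=V_1\cup V_2$ with $|V_1|=|V_2|=n/2$, search for the subwitness inside $V_1$, and use only vertices of $V_2$ in the extension, so that the two phases depend on disjoint sets of edges. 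The first moment is quick: as in the proof of Lemma~\ref{lemma:gnp_fmm}, a fixed ordered pair of $\ell$-tuples of distinct vertices of $V_1$ forms a subwitness exactly when $r=2\ell-k$ prescribed pairs are edges and $(\sqrt2-1)k^2+O(k)$ prescribed pairs are non-edges, whence $\er X=(1+o(1))(n/2)^{2\ell}p^{r}(1-p)^{(\sqrt2-1)k^2+O(k)}$; running the estimates of Lemma~\ref{lemma:gnp_fmm} in reverse and using the definition of $k$ yields $\er X\ge(np)^{\Omega(\epsilon k)}$, which in particular dwarfs $2^{2\ell}$, so restricting to $V_1$ costs nothing.

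The crux is the variance. I would write $\er{X^2}=\sum\pr{\sigma_1,\sigma_2\text{ are both distinct-vertex }k\text{-subwitnesses}}$ and sort the pairs $(\sigma_1,\sigma_2)$ by overlap type: how many row- and column-indices of $\sigma_2$ are occupied by vertices of $\sigma_1$, and which of the non-edge constraints of $\sigma_2$ are already forced by $\sigma_1$. Pairs sharing no constraint contribute $(1+o(1))\er X^2$; for every genuine overlap type one argues that reusing vertices of $\sigma_1$ decreases the number of admissible pairs by more than it raises the joint probability, so that summing over overlap types gives $o(\er X^2)$ — here the $(1-\epsilon)$-slack in $k$ is precisely what makes this (geometric-type) sum converge. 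The delicate point, exactly as in Lemma~\ref{lemma:gnp_fmm}, is that when $p=o(1)$ all the counts and probabilities are of polynomial order in $n$, so the estimates must be performed multiplicatively and the ordering factors of size $k^{\Theta(k)}$ tracked carefully rather than absorbed into error terms; I expect this to be the main obstacle. Granting $\var X/\er X^2\to0$, the Chebyshev-type corollary quoted above gives $X>0$ with high probability.

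Finally I would extend a distinct-vertex $k$-subwitness found in $V_1$, with rows $s_1,\dots,s_\ell$ and columns $t_{k-\ell+1},\dots,t_k$, to a $k$-witness by choosing the missing columns $t_1,\dots,t_{k-\ell}$ and rows $s_{\ell+1},\dots,s_k$ inside $V_2$. A new column $t_j$ need only satisfy $t_j\sim s_j$ and $t_j\nsim s_1,\dots,s_{j-1}$, an adjacency pattern on $j\le k-\ell=(1-\epsilon)\log(np)/p$ vertices of $V_1$; since the edges between $V_1$ and $V_2$ are independent of the subwitness, the admissible set $U_j\subseteq V_2$ is distributed as $\Bin(|V_2|,p(1-p)^{j-1})$ with mean at least $(np)^{\epsilon/2}$, and a Chernoff bound together with a union bound over the $O(k)$ column- and row-slots shows that with high probability every admissible set has size at least $\tfrac12(np)^{\epsilon/2}$. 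For $j<j'$ one has $U_j\subseteq\{v:v\sim s_j\}$ while $U_{j'}\subseteq\{v:v\nsim s_j\}$, so the column admissible sets are pairwise disjoint, and likewise for the rows; hence any union of $a$ of them has size at least $\tfrac a2(np)^{\epsilon/2}\ge a$, Hall's condition holds, and a system of distinct representatives exists. These representatives together with the subwitness form a $k$-witness — the diagonal and superdiagonal conditions split cleanly between the subwitness part and the new vertices — whose $2k$ vertices are all distinct because $V_1\cap V_2=\emptyset$, and an application of Lemma~\ref{lemma:witness} completes the proof.
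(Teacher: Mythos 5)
Your overall architecture is plausible: running the second moment on distinct-vertex $k$-subwitnesses inside $V_1$ and then completing to a $k$-witness with fresh vertices of $V_2$ (disjoint admissible sets, Chernoff plus a union bound over the $O(k)$ slots, Hall's theorem, two-round edge exposure) is a legitimate alternative to the paper's device, which avoids any extension step by running the second moment directly on \emph{divided} full witnesses with $\mb s[k]\subseteq V_1$, $\mb t[k]\subseteq V_2$. Your extension phase is essentially correct (it is the idea the paper only sketches as intuition), modulo the small point that Hall's condition should be verified jointly for the row- and column-slots so that the representatives are distinct across the two families and $\mb s[k]\cap\mb t[k]=\emptyset$ really holds.

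The genuine gap is the variance estimate, which you assert rather than prove, and which is the actual content of the lemma. The claim that ``reusing vertices of $\sigma_1$ decreases the number of admissible pairs by more than it raises the joint probability, and the $(1-\epsilon)$-slack makes the sum converge'' is not a proof, and as a heuristic it is unreliable: applied verbatim to full witnesses at $k=(1-\epsilon)\cdot 4\log(np)/p$, just below their first-moment threshold, it would give a false conclusion, because the dominant overlap terms --- two copies sharing about $\tfrac{\sqrt2}{2}k$ rows and $\tfrac{\sqrt2}{2}k$ columns --- diverge there. What certifies that the second moment survives precisely up to $k=(1-\epsilon)(2+\sqrt2)\log(np)/p$ is an extremal bound on how many superdiagonal non-edge constraints two overlapping copies can share: in the paper's notation, $\Phi(a,b)\le\bigl(1-\tfrac{\sqrt2}{2}\bigr)k(a+b)$ for all $a,b\in[k]$, with the worst case at $a=b=\tfrac{\sqrt2}{2}k$ (Lemma~\ref{lemma:maxdensity1}); combined with $pk=(1-\epsilon)(2+\sqrt2)\log(np)$ and $(2+\sqrt2)\bigl(1-\tfrac{\sqrt2}{2}\bigr)=1$, this is exactly what makes the sum over overlap types geometric, and your proposal contains no substitute for this computation. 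In addition, the small-overlap regime --- where the $k^{\Theta(k)}$ placement factors and the $p^{-d}$ factor from shared diagonal edges must be beaten, and where the hypothesis $p\ge\log^2 n/\sqrt n$ enters through $k^2/n\le 16/\log^2 n$ --- is flagged by you as ``the main obstacle'' but not resolved. As it stands, the proposal settles the easy parts (first moment, extension) and leaves the heart of the proof open.
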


     \begin{proof}
        Partition the vertex set $V$ of $\gnp$ into $V_1$ and $V_2$ with $|V_1| = \lfloor \frac n2 \rfloor$ and $|V_2| = \lceil \frac n2 \rceil$. Fix $k = \left( 1- \epsilon \right) \cdot \frac{\left(2+ \sqrt{2} \right) \log (np)}{p} $. We say that a pair $(\mb s, \mb t)$ (or the corresponding $k$-witness) is \emph{divided} if $s_i \in V_1$ and $t_i \in V_2$ for $i \in [k]$. We will show that with high probability, $\gnp$ contains a divided $k$-witness. Let $X_k$ denote the number of such $k$-witnesses in $\gnp$. Furthermore, for a pair of $k$-tuples $(\mb s, \mb t)$, we denote the event that $(\mb s, \mb t)$ is a $k$-witness by $\Wst$.
            Let us first estimate the expectation of $X_k$. We will denote $n' = \frac n2$, and define the falling factorial power
            \begin{equation}
                \ffp{n'}{k} = \frac{n'!}{(n'-k)!}.
            \end{equation}
            We crudely bound $\ffp{n'}{k} \geq \ \left( \frac{n'}{2} \right)^k$ for $k \leq \frac {n'}{2}$. We also use $1-p \geq e^{-(p+p^2)} \geq e^{-1.1p}$ for $0 \leq p \leq 0.1.$ In the following equation, the sum runs over all divided pairs of $k$-tuples $(\mb s, \mb t)$.
            \begin{align*} 
                \er{X_k} &= \sum_{\mb s, \mb t} \pr{\Wst}  = \left (\ffp{n'}{k} \right)^2 p^k (1-p)^{\binom{k}{2}} 
                \geq \left( \frac {n}{4}\right)^{2k}  p^k e^{-1.1p \cdot \frac {k^2}{2} } \\
                 & \geq \left(  \frac{1}{\tk{16}} n^2 p e^{-1.1 \left(1 + \frac{\sqrt{2}}{2} \right)\log(np)} \right)^k 
                  >	 \left(  \frac{1}{\tk{16}} n^2 p \left(np \right)^{-1.9} \right)^k 
            \end{align*}     
            where in the second line, we used $pk \leq \left(2+\sqrt{2} \right) \log (np)$. It follows that
            $\er{X_{k}}  \geq \left(n^{0.1} \right)^k \longrightarrow \infty.$
            
            To use Chebyshev's inequality, we will need second moment estimates.
            Fix a specific divided pair $(\mb s, \mb t)$. The events $\Wstp$ are symmetric over all the divided pairs $(\mb s', \mb t')$, so the standard computation (see, e.g.~\cite[Section 4.3]{alonspencer}) gives 
            \begin{align*}
                \var{X_k} \leq \er{X_k} \sum_{\substack{ s', t' \\ 
                \mb s[k] \cap \mb s'[k]\neq \emptyset \\
                \mb t[k] \cap \mb t'[k] \neq \emptyset
            }}
                \pr{\Wstp \mid \Wst }.
            \end{align*}
            
            Note that the sum includes the case $(\mb s', \mb t') = (\mb s, \mb t)$, so we do not need the additional summand $\er{X_k}$ which often appears in the formula. For any pair of $k$-tuples $(\mb s', \mb t')$, we have defined the set of \emph{superdiagonal} pairs to be $\left \{\left(s'_i, t'_j \right): i, j \in [k], i<j \right \}$, and the set of \emph{diagonal pairs} to be $\left \{\left(s'_i, t'_i \right): i \in [k]\right  \}$.
            We now partition the pairs $(\mb s', \mb t')$. Let $P_{a, b, d}$ denote the set of divided pairs $(\mb s', \mb t')$ such that
            \begin{itemize}
                \item $|\mb s[k] \cap \mb s'[k]|=a$, $| \mb t[k] \cap \mb t'[k]|=b$, and
                \item the number of vertex pairs which are diagonal in both $(\mb s, \mb t)$ and $\left(\mb s', \mb t' \right)$ is $d$.
            \end{itemize}
             Moreover, we define the term $T_{a, b, d} = \frac{1}{\er{X_k}} \sum_{(\mb s', \mb t') \in P_{a, b, d}} \pr{\Wstp \mid \Wst }$.  If $d> a$ or $d> b$, $T_{a, b, d}=0$, so we let $d$ run up to $a$ for simplicity.
			Using this partition, our sum can be written as
			\begin{equation}
				\frac{\var {X_k}}{\left(\er{X_k} \right)^2}  \leq \sum_{a, b = 1}^k \sum_{d=0}^{a} T_{a, b, d}.
			\end{equation}

			We now analyse the term $T_{a, b, d}$. 
We start by counting the divided pairs in $P_{a, b, d}$. There are at most $\ffp{n'-k}{k-a} \ffp{n'-k}{k-b}$ ways of selecting and indexing the vertices of $\mb s'[k] \setminus \mb s[k]$ and $\mb t'[k] \setminus \mb t[k]$. Then we select $d$ distinct pairs $\left(s_i, t_i \right)$ which are diagonal in $(\mb s, \mb t)$ and will also be diagonal in $(\mb s', \mb t')$, which can be done in at most $\binom{k}{d}$ ways. The number of ways to place those pairs into $(\mb s', \mb t')$, i.e.~to choose the index $j$ such that $s'_j = s_i$ and $t'_j  = t_i$  is at most $\ffp{k}{d}$. 
			Similarly, we choose $a-d$ of the remaining vertices in $\mb s[k]$, and assign them any preimage under $\mb s'$, which gives an additional factor of $\binom{k-d}{a-d} \ffp{k-d}{a-d}$. Finally, we do the same for $\mb t$ and $\mb t'$. Altogether,
			\begin{equation} \label{eq:pabd}
				\left | P_{a, b, d} \right | \leq \ffp{n'-k}{k-a} \ffp{n'-k}{k-b} \binom{k}{d} \ffp{k}{d} \binom{k-d}{a-d} \ffp{k-d}{a-d} \binom{k-d}{b-d} \ffp{k-d}{b-d}.
			\end{equation}
			We will later use the fact that $\frac{\ffp{n'-k}{k-a}}{\ffp{n'}{k}} \leq \frac{\ffp{n'-a}{k-a}}{\ffp{n'}{k}} = \frac{(n'-a)!(n'-k)!}{(n'-k)!n'!}= \frac{1}{\ffp{n'}{a}}$.
			To bound the probability of $(\mb s', \mb t') \in P_{a, b, d}$ being a $k$-witness, we first bound the number of pairs which are superdiagonal for both $(\mb s, \mb t)$ and $(\mb s',\mb  t')$. We can even forget about how the overlap vertices are placed in $\left(\mb s', \mb t' \right)$, that is, the number of common superdiagonal pairs is bounded above by $\Phi(a, b)$, where 
				$$\Phi(a, b) =  \max_{A, B \subs [k],\, |A| = a, \, |B| = b} \left | \left \{ (i, j) \in A \times B : i<j \right \} \right |.$$ 
				
			Note that trivially $\Phi(a, b) \leq ab$. By definition of $\Phi$, $ \pr{ \Wstp \mid \Wst } \leq p^{k-d} (1-p)^{\binom k2 - \Phi(a, b)}$. Dividing by $\er{X_k}$ and using \eqref{eq:pabd}, we get
			\begin{equation} \label{eq:tabd}
				T_{a, b, d} \leq \frac{1}{(n')_a(n')_b} k^{a+b-d} \binom{k}{d} \binom{k}{a-d} \binom{k}{b-d} p^{-d}(1-p)^{-\Phi(a, b)}.
			\end{equation}
			The following lemma is essential to our argument.
				\begin{LEMMA} \label{lemma:maxdensity1}
					For any $k \in \mathbb{N}$ and  $a, b \in [k]$, \ 
                $ \displaystyle \frac{\Phi(a, b)} {k (a+b)} \leq 1 - \frac{\sqrt{2} }{2}.$
				\end{LEMMA}
				\noindent From its proof, which we defer to the end of this section, it will be clear that the maximum value of $T_{a, b,d }$ is achieved at 
				 $a = b = \frac{\sqrt{2}}{2}k$.  
				
				We now split into two cases according to the value of $a+b$. Firstly, assume that $a+b \geq \frac {24}{\epsilon p}.$ 
				    In this case, we trivially bound the \tk{product of the} three
                                    remaining binomial coefficients in \eqref{eq:tabd} by
                                    $2^{3k}$. The whole purpose of analysing the term $T_{a,b,d}$
                                    was do gain an extra factor of $k^{-d}$ compared to the trivial
                                    bound on the number of vertex orderings in $(\mb s, \mb
                                    t)$. \tk{This gives}
				\begin{align*}
					T_{a, b, d} &\leq 4^{a+b} 2^{3k} (np)^{-(a+b)} (kp)^{a+b-d}e^{\left(p+p^2 \right) \Phi(a, b)}, \\
					\sum_{d=0}^a T_{a, b, d} &\leq 4^{a+b+1} \cdot 2^{3k} (np)^{-(a+b)} (kp)^{a+b}e^{\left(p+p^2 \right) \Phi(a, b)}.							
				\end{align*}
					For the second inequality, we just summed over $d$ and  used $\sum_{d=0}^a (kp)^{-d} \leq \sum_{d=0}^a \left(2 \log (np) \right)^{-d}<4$. Using Lemma~\ref{lemma:maxdensity1} and $\frac{k}{a+b} \leq \frac{\epsilon}{6} \log (np)$, we get 
			\begin{align*}
				\sum_{d=0}^a T_{a, b, d} & \leq \left( 8kp \cdot 2^{\frac{\epsilon}{2}\log(np)} (np)^{-1}  e^{\left(1- \frac{\sqrt{2}}{2}\right)k \left(p+p^2\right) }  \right)^{a+b}\tk{.}
			\end{align*}
				Let $n$ be large enough so that $p \leq 0.1\epsilon$. \tk{Now} $kp =  \left( 1- \epsilon \right) \left(2+ \sqrt{2} \right) \log (np)$ gives $e^{\left(1- \frac{\sqrt{2}}{2}\right)k \left(p+p^2\right)} \leq e^{(1-\epsilon)(1+0.1\epsilon)\log(np)}\leq (np)^{(1-0.9\epsilon)}$, and hence
			\begin{align*}
                          \sum_{d=0}^a T_{a, b, d}  \leq
                               \tk{ \left(8(1-\epsilon)(2+\sqrt2)\log(np)(np)^{\left(\frac12\log 2-0.9\right)\epsilon}\right)^{a+b}} \leq (np)^{-\frac{\epsilon}{4}(a+b)}
			\end{align*}
				for large enough $n$. Summing over $a$, $b$, we get
				\begin{equation} \label{eq:tabd for large ab}
					\sum_{\substack{
						a, b \in [k]\\
						a+b \geq \frac{\epsilon}{24p} }}
						\sum_{d=0}^a T_{a, b, d} \leq \sum_{a=1}^k \sum_{b=1}^k (np)^{-\frac{\epsilon}{4}(a+b)} \leq \left ( \sum_{a=1}^k (np)^{-\frac{\epsilon a}{4}} \right)^2 \longrightarrow 0.
				\end{equation}
				        
           In the second case, $a+b < \frac{\tk{24}}{\tk{\epsilon}p}$, we bound the binomial coefficients by
           \tk{$\binom ki \leq k^i$, $i\in\{d,\,a-d,\,b-d\}$}. Then
			$$T_{a, b, d} \leq 4^{a+b}n^{-(a+b)}k^{2a+2b-2d} p^{-d}(1-p)^{-ab} .$$
			Assuming $p < \frac 12$, we use the inequality $(1-p) \geq e^{-p-p^2} \geq e^{-2p}$, which implies $(1-p)^{-ab} \leq e^{2pab}\leq e^{\frac{p(a+b)^2}{2}}$. Furthermore, $kp > 1$, so
			$$T_{a, b, d } \leq \left( 4 n^{-1}k^2 e^{\frac{p(a+b)}{2}} \right)^{a+b}.$$
            From the condition $p \geq \frac{\log^2 n}{\sqrt{n}}$, it follows that $n^{-1}k^2 \leq n^{-1}\cdot \frac{16n}{\log^4 n} \log^2 (np) \leq \frac{16}{ \log^2n}$. Moreover, $e^{\frac{p(a+b)}{2}} \leq e^{\frac{12}{\epsilon}}$, so altogether,
$T_{a, b, d } \leq \left( \frac{1}{\log n}\right)^{a+b}$ for large enough $n$.
			Summing up, 
				\begin{equation*}
					\sum_{\substack{
						a, b \in [k]\\
						a+b < \frac{\epsilon}{24p} }}
						\sum_{d=0}^a T_{a, b, d} \leq \sum_{a=1}^k \sum_{b=1}^k \tk{(a+1)}( \log n)^{-(a+b)} \leq \left ( \sum_{a=1}^k \tk{(a+1)}( \log n)^{-a} \right)  \left ( \sum_{b=1}^k ( \log n)^{-b} \right) \longrightarrow 0.
				\end{equation*}
            We conclude that
             $\frac{\var{X_{k}}}{\left(\er{X_{k}} \right)^2} = o(1)$, and hence $X_{k} >0$ with high probability. Upon this event, Lemma~\ref{lemma:witness} implies $Z(G) \leq n-k$. 
     \end{proof}

      Now we prove Lemma~\ref{lemma:maxdensity1}, which is essentially finding the induced subgraph of a $k$-witness with a minimum expected number of copies in $\gnp$.
        \begin{proof}[Proof of Lemma~\ref{lemma:maxdensity1}]
            Let $k$ be fixed. Recall that we are trying to maximise $\frac{\Phi(a, b)} {k (a+b)}$ over all $a$, $b$, where $\Phi(a, b) =  \max_{A, B \subs [k],\, |A| = a, \, |B| = b} \left | \left \{ (i, j) \in A \times B : i<j \right \} \right |.$
            
            We first fix $|A| = a$, $|B| = b$ and $|A \cap B| = g$.  If $A$ and $B$ are a selection of rows and columns of a $k \times k$ matrix, then $g$ denotes the number of diagonal entries at the intersection of selected rows and columns. Define $\phi(A, B) =  \left | \left \{ (i, j) \in A \times B : i<j \right \} \right |$. We claim that $\phi(A, B) \leq ab - \frac{(g+1)g}{2}$. To see this, note that each element $(c_1, c_2)$ with $c_1, c_2 \in A\cap B$ and $c_1 \geq c_2$ is contained in $A \times B$, but not counted by $\phi$. There are $\frac{(g+1)g}{2}$ such pairs (in fact, such  $(c_1, c_2)$ are indices of the subdiagonal matrix entries selected by $A$ and $B$).
            
            Now we minimise $g$ for fixed $a$ and $b$. From the identity $|A \cup B| + |A \cap B| = |A|+ |B|$ and $|A \cup B| \subs [k]$, we get $g \geq a+b -k$, so $\phi(A, B) \leq  ab - \frac{(a+b-k+1)(a+b-k)}{2} $. Taking the maximum over $A$ and $B$ and 
using the AM-GM inequality, we get
            \begin{align*}
                \Phi(a, b) &\leq ab - \frac{(a+b-k+1)(a+b-k)}{2} \leq \left( \frac{a+b}{2} \right)^2 - \frac{(a+b-k)^2}{2} \\
                \frac{\Phi(a, b)} {k (a+b)} &\leq \frac{1}{k(a+b)} \left(\left( \frac{a+b}{2} \right)^2 - \frac{(a+b-k)^2}{2}  \right).
            \end{align*}
            
            We substitute $a+b = 2\alpha k$, so that the problem reduces to maximising the function $f(\alpha) = \frac{1}{4\alpha} \left(2\alpha^2 - (2\alpha-1)^2 \right) $ for $0 < \alpha \leq 1$.  This is a simple calculus exercise, but we provide details for the sake of transparency.
            $
                f'(\alpha) = \frac{1}{4 \alpha^2}(-2\alpha^2+1)
            $,
            so $f$ attains its local maximum at $\alpha_{0} = \frac{\sqrt{2}}{2}$. Evaluating $f$ at $\alpha_{0}$, we get
            $$\Phi(a, b) \leq \max_{\alpha} f(\alpha)  = f(\alpha_0)
            = 1 - \frac{ \sqrt{2}}{2} 
            $$
            for all $a, b \subs [k]$. Note that the equality is asymptotically attained when $A = [\ell]$, $B = [k-\ell+1, k]$ with $\ell = \lfloor \frac{\sqrt{2}k}{2} \rfloor$, which we have used in Lemma~\ref{lemma:gnp_fmm} (the 0-statement of Theorem~\ref{thm:forcing_gnp}).
\end{proof}

        \section{Spectral bounds}
        
			In this  section we discuss the bounds on the zero forcing number in terms of the graph eigenvalues.
			The study of spectral properties and their relation to other graph parameters is an established area of research with many diverse techniques and applications, surveyed for example in the monograph of Godsil and Royle~\cite{gr01}. 
			One of the earliest results of this type is Hoffman's bound on the independence number of a graph. Namely, let $G$ be an $n$-vertex $d$-regular graph, and let $\lambda_{\min}$ denote its smallest eigenvalue. Hoffman proved that then $G$ contains no independent set of order larger than $\frac{-\lambda_{\min} n}{d-\lambda_{\min}}$. Note that since the trace of the adjacency matrix of a graph is zero, $\lambda_{\min}$ is negative. There are many examples showing the bound to be tight.
        
            We establish an analogue of Hoffman's bound for the zero forcing number, showing that $Z(G) \geq n \left(1+\frac {2\lambda_{\min}}{d-\lambda_{\min}}  \right).$
            To prove this result we use the following well-known estimate on the edge distribution of a graph in terms of its eigenvalues. Part (ii) is provided in, e.g.,~\cite{ks06}, whereas the variant (i) follows from the same proof. For a graph $G=(V, E)$ and two sets $U, W \subs V$, denote the number of edges with one endpoint in $U$ and the other one in $W$ by $e(U, W)$. Any edge with both endpoints in $U \cap W$ is counted twice. Recall that an $\ndl$ graph is a $d$-regular $n$-vertex graph in which all eigenvalues but the largest one are at most $\lambda$ in absolute value.
            
            \begin{THM} \label{thm:ks:edgedist}
                Let $G$ be an $\ndl$-graph, and denote its smallest eigenvalue by $\lambda_{\min}$. Then for any two vertex subsets $U, W$ of $G$,
                \begin{enumerate}
                    \item 
                $ \displaystyle{   \frac{d|U||W|}{n}  - e(U, W) \leq\
                -\lambda_{\min} \sqrt{|U||W|\left(1 - \frac{|U|}{n} \right)\left(1 - \frac{|W|}{n} \right)}.}$
                    \item $\displaystyle{  \left | \frac{d|U||W|}{n}  - e(U, W)  \right | \leq\
                \lambda \sqrt{|U||W|\left(1 - \frac{|U|}{n} \right)\left(1 - \frac{|W|}{n} \right)}.}$
                \end{enumerate}
            \end{THM}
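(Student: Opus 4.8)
The plan is to derive both inequalities from the spectral decomposition of the adjacency matrix of $G$, the standard route to expander-mixing-type estimates. Let $A$ be the adjacency matrix of $G$. Since $G$ is $d$-regular, $\mathbf{1}$ is an eigenvector of $A$ with eigenvalue $d$, so we may fix an orthonormal eigenbasis $\mathbf{v}_1 = n^{-1/2}\mathbf{1}, \mathbf{v}_2, \dots, \mathbf{v}_n$ of $A$ with eigenvalues $d = \lambda_1 \geq \lambda_2 \geq \dots \geq \lambda_n = \lambda_{\min}$, where $|\lambda_i| \leq \lambda$ for every $i \geq 2$ by hypothesis. For a set $X \subseteq V$, expand $\mathbf{1}_X = \sum_{i} \xi^X_i \mathbf{v}_i$; then $\xi^X_1 = \langle \mathbf{1}_X, n^{-1/2}\mathbf{1} \rangle = |X|/\sqrt{n}$ and, by Parseval, $\sum_{i \geq 2} (\xi^X_i)^2 = \|\mathbf{1}_X\|^2 - (\xi^X_1)^2 = |X|\bigl(1 - |X|/n\bigr)$. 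Writing $e(U,W) = \mathbf{1}_U^{\top} A\, \mathbf{1}_W$ and expanding both indicator vectors in the eigenbasis gives the exact identity
\[
  e(U,W) - \frac{d\,|U|\,|W|}{n} \;=\; \sum_{i=2}^{n} \lambda_i\, \xi^U_i\, \xi^W_i ,
\]
which will be the common starting point for both parts.

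For part (ii) I would bound the right-hand side in absolute value: using $|\lambda_i| \leq \lambda$ for $i \geq 2$, the triangle inequality, and then Cauchy--Schwarz,
\[
  \left| e(U,W) - \frac{d\,|U|\,|W|}{n} \right|
  \;\leq\; \lambda \sum_{i \geq 2} |\xi^U_i|\,|\xi^W_i|
  \;\leq\; \lambda \Bigl( \sum_{i \geq 2} (\xi^U_i)^2 \Bigr)^{1/2} \Bigl( \sum_{i \geq 2} (\xi^W_i)^2 \Bigr)^{1/2} ,
\]
and substituting the Parseval identity for $U$ and $W$ yields exactly the bound in (ii).

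Part (i) is the one-sided, Hoffman-type estimate, and the crucial point is that here we must \emph{not} pass to absolute values: only the negative extreme eigenvalue $\lambda_{\min}$ is allowed to survive. For $U = W$ this is immediate, since $\mathbf{u} := \mathbf{1}_U - \tfrac{|U|}{n}\mathbf{1}$ lies in $\mathbf{1}^{\perp}$, on which $A$ has smallest eigenvalue $\lambda_{\min}$, so $e(U,U) - \tfrac{d|U|^2}{n} = \mathbf{u}^{\top} A\,\mathbf{u} \geq \lambda_{\min}\|\mathbf{u}\|^2 = \lambda_{\min}\,|U|\bigl(1-|U|/n\bigr)$ — this is the classical argument behind Hoffman's independence bound. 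For general $U, W$ I would run the same manipulation with $\mathbf{u}$ and $\mathbf{w} := \mathbf{1}_W - \tfrac{|W|}{n}\mathbf{1}$, noting that $e(U,W) - \tfrac{d|U||W|}{n} = \mathbf{u}^{\top} A\,\mathbf{w}$ and that the principal eigenvalue again drops out because $\mathbf{u},\mathbf{w} \perp \mathbf{1}$; the positive semidefiniteness of $A - \lambda_{\min} I$ is what lets one bound the cross term $\mathbf{u}^{\top} A\,\mathbf{w}$ from below, and the remaining sums are controlled exactly as in (ii). The step I expect to be the main obstacle is precisely this sign bookkeeping: arranging the contribution of the non-principal eigenvalues so that the final bound is governed by $\lambda_{\min}$ rather than by $\max_{i \geq 2}|\lambda_i| = \lambda$.
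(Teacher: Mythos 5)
Your part (ii) is correct and is essentially the proof the paper points to: the paper gives no argument of its own for this theorem (it cites the Krivelevich--Sudakov survey for (ii) and asserts that (i) "follows from the same proof"), and the cited proof is exactly your route --- expand $\mathbf{1}_U,\mathbf{1}_W$ in an orthonormal eigenbasis, use $e(U,W)-\frac{d|U||W|}{n}=\sum_{i\ge 2}\lambda_i\xi_i^U\xi_i^W$, Parseval, the triangle inequality and Cauchy--Schwarz. The identity and the computation $\sum_{i\ge2}(\xi_i^X)^2=|X|\bigl(1-\frac{|X|}{n}\bigr)$ are right, and so is the diagonal case $U=W$ of part (i), where each term obeys $\lambda_i(\xi_i^U)^2\ge\lambda_{\min}(\xi_i^U)^2$.

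For general $U,W$ in part (i), however, there is a genuine gap, and the ``sign bookkeeping'' you defer is not a technicality. Positive semidefiniteness of $M:=A-\lambda_{\min}I$ controls quadratic forms, not bilinear ones: the best it yields for a cross term is $|\mathbf{u}^{\top}M\mathbf{w}|\le\sqrt{(\mathbf{u}^{\top}M\mathbf{u})(\mathbf{w}^{\top}M\mathbf{w})}$, and on $\mathbf{1}^{\perp}$ one only has $\mathbf{u}^{\top}M\mathbf{u}\le(\lambda_2-\lambda_{\min})\|\mathbf{u}\|^2$, so this route produces a bound involving $\max(\lambda_2,-\lambda_{\min})$ rather than $-\lambda_{\min}$. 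In fact the inequality you would need, $\mathbf{u}^{\top}A\mathbf{w}\ge\lambda_{\min}\|\mathbf{u}\|\,\|\mathbf{w}\|$ for centered indicator vectors, is false for a general $\ndl$-graph: let $G$ consist of two copies of $K_m$ joined by a perfect matching, so $n=2m$, $d=m$, the spectrum is $\{m,\;m-2,\;0^{(m-1)},\;(-2)^{(m-1)}\}$, hence $\lambda_{\min}=-2$ and $G$ is an $\ndl$-graph with $\lambda=m-2$. Taking $U$ and $W$ to be the two cliques gives $e(U,W)=m$, while $\frac{d|U||W|}{n}=\frac{m^2}{2}$ and $-\lambda_{\min}\sqrt{|U||W|\bigl(1-\frac{|U|}{n}\bigr)\bigl(1-\frac{|W|}{n}\bigr)}=m$, so inequality (i) fails for every $m\ge5$. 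What your termwise argument does prove is the statement under the extra hypothesis $\lambda_2\le-\lambda_{\min}$ (equivalently $-\lambda_{\min}=\max_{i\ge2}|\lambda_i|$): then $\lambda_i\xi_i^U\xi_i^W\ge\lambda_{\min}|\xi_i^U||\xi_i^W|$ for all $i\ge2$ and Cauchy--Schwarz finishes, which is evidently the reading under which ``the same proof'' gives (i). Without such a hypothesis no rearrangement of the eigenvalue contributions can close your argument, because the claimed one-sided bound in terms of $\lambda_{\min}$ alone is simply not true; the general statement requires $-\lambda_{\min}$ to be replaced by $\max(\lambda_2,-\lambda_{\min})$, or by $\lambda$, which is just part (ii).
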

            
            \begin{proof}[Proof of Theorem~\ref{thm:zeroforcing_pseudo} (i)]
                Let $(\mb s, \mb t)$ be a $k$-witness in $G$,
                and $k = 2\mu n$. Note that there are no edges between the sets $U = \{s_1, s_2, \dots, s_{\mu n}\}$ and $W = \{t_{\mu n +1}, \dots, t_k \}$. Hence, using Theorem~\ref{thm:ks:edgedist}, we get
                    \begin{align*}
                        0  = e(U, W) &\geq d \mu^2 n + \lambda_{\min} \mu n (1 - \mu)\\           
                        -\lambda_{\min} & \geq d\mu - \lambda_{\min}\mu\\
                        \mu &\leq \frac{-\lambda_{\min}}{d - \lambda_{\min}}.
                    \end{align*}
                
                Hence the largest witness in $G$ has order at most $\frac{-2\lambda_{\min} n}{d - \lambda_{\min}}$, which by Lemma~\ref{lemma:witness} implies 
                $$Z(G) \geq n \left(1+\frac {2\lambda_{\min}}{d-\lambda_{\min}}  \right).\qedhere$$
                
            \end{proof}
            Surprisingly, the additional factor of two in the above-mentioned bound that looks like an artefact of the proof, turns out to be necessary, and the result of Theorem  (i) is shown to be tight by the following example. 
            
            \begin{PROP} \label{prop:linegraph}
                For any  $D \geq 2$, and for infinitely many values of $N$, there exists an $N$-vertex $D$-regular graph $G^*$ whose smallest eigenvalue is $\lambda_{\min} = -2$, and which satisfies $N - Z\left(G^* \right)\geq \frac{4N}{D+2}-2$.
            \end{PROP}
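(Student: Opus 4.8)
The plan is to take $G^{*}=L(H)$, the line graph of a carefully chosen graph $H$. The eigenvalue condition is essentially automatic: if $B$ is the vertex--edge incidence matrix of $H$, then the adjacency matrix of $L(H)$ equals $B^{\top}B-2I$, so $\lambda_{\min}(L(H))\ge -2$, with equality exactly when $B^{\top}B$ is singular, i.e.\ when $e(H)$ exceeds $\mathrm{rank}(B)$. In particular $\lambda_{\min}(L(H))=-2$ whenever $e(H)>|V(H)|$, and also when $H$ is an even cycle --- and these cover every $H$ used below. Moreover $L(H)$ is $D$-regular precisely when every edge of $H$ joins two vertices of degree-sum $D+2$.

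The forcing bound is supplied by path-forcing. Suppose $H$ has a path on $\ell$ vertices $v_{1},\dots,v_{\ell}$, with edges $p_{i}=\{v_{i},v_{i+1}\}$. In $L(H)$ the vertices $p_{1},\dots,p_{\ell-1}$ induce a path, because $p_{i}\sim p_{j}$ if and only if $|i-j|=1$. Colouring all of $V(L(H))$ black except $p_{2},\dots,p_{\ell-1}$, the vertex $p_{1}$ has $p_{2}$ as its unique white neighbour and forces it; inductively $p_{i}$ forces $p_{i+1}$, so $p_{2},\dots,p_{\ell-1}$ all become black. Hence $Z(L(H))\le e(H)-(\ell-2)$, i.e.\ $e(H)-Z(L(H))\ge \ell-2$.

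For even $D=2k-2$ this already finishes the proof. Take $H=H_{m}$ to be a $k$-regular Hamiltonian graph on $m$ vertices; such graphs exist for an infinite set of $m$ (e.g.\ a suitable power of $C_{m}$, plus an antipodal perfect matching when $k$ is odd, and $C_{m}$ with $m$ even when $k=2$). Then $L(H_{m})$ is $D$-regular on $N=km/2$ vertices with $\lambda_{\min}=-2$ (indeed $e(H_{m})=km/2>m$ for $k\ge 3$, while $C_{m}$ is an even cycle when $k=2$). Deleting one edge of a Hamilton cycle gives a Hamilton path, so $N-Z(L(H_{m}))\ge m-2$, and since $\tfrac{4N}{D+2}=\tfrac{4(km/2)}{2k}=m$ this is exactly $N-Z(G^{*})\ge \tfrac{4N}{D+2}-2$, for infinitely many $N$.

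The odd case $D=2a-1$ is the real obstacle. Now $D$-regularity forces $H$ to be bipartite with part-degrees $a$ and $a+1$; a short count shows the two parts have sizes differing by $\tfrac{N}{a(a+1)}$, so the longest path of $H$ has only about $\tfrac{2N}{a+1}<\tfrac{4N}{D+2}$ vertices and path-forcing alone is too weak. The plan is to choose $H$, inside an infinite family of bipartite $(a,a+1)$-semiregular graphs, admitting a path $P$ that misses only $O(1)$ vertices of each part and for which every vertex of the smaller part has exactly one edge off $P$, these off-$P$ edges being precisely the edges at the $O(1)$ large-part vertices missed by $P$. One path-forces along $P$; then for each small-part vertex $y$ the off-$P$ edge at $y$ is the unique white neighbour of a $P$-edge incident to $y$ and is forced; finally a local cascade at each missed large-part vertex colours its remaining incident edges. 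Bookkeeping shows $\tfrac{4N}{D+2}-O(1)$ vertices become black, and a constant adjustment of $P$ trims the error to $2$. The difficult part is precisely setting up this family and checking that all cascade steps can be performed in a consistent order; with that in hand, Theorem~\ref{thm:zeroforcing_pseudo}(i) certifies the bound is optimal up to the additive constant.
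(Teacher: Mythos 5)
Your even-$D$ argument is correct and is essentially the paper's own proof: the paper also takes $G^{*}=L(G)$ for a Hamiltonian $d$-regular graph $G$ (so $N=nd/2$ and $D=2d-2$), forces along the induced Hamilton cycle in the line graph to get $N-Z(G^{*})\ge n-2=\frac{4N}{D+2}-2$, and invokes Hoffman's observation that $\lambda_{\min}(L(G))=-2$; your $B^{\top}B-2I$ justification and Hamilton-path forcing are the same argument in slightly different clothes. Where you diverge is the odd-$D$ case: there your cascade construction is only a sketch, and, as you say yourself, the key step (building the biregular family and verifying that the forcing steps can be scheduled consistently) is not carried out, so as a proof of the literal statement ``for any $D\ge 2$'' your write-up is incomplete. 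It is worth noting, however, that the paper's proof has exactly the same limitation: its construction only realizes even degrees $D=2d-2$, and no separate argument is given for odd $D$. So you have not fallen short of the published argument; rather, you have correctly flagged that the line-graph-of-a-regular-graph construction does not by itself cover all $D\ge 2$ (and indeed, for odd $D$ any example with $\lambda_{\min}=-2$ and large $N$ must come from a line graph of an edge-biregular graph, which is why your path-forcing count degrades there). For the purpose the proposition serves --- exhibiting tightness of Theorem~\ref{thm:zeroforcing_pseudo}(i) --- the even-$D$ family that both you and the paper construct already suffices.
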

            
            \begin{proof}
                Let $G$ be an $n$-vertex $d$-regular graph which contains a Hamilton cycle consisting of edges $e_1, e_2, \dots, e_n$ in this order. Clearly, such graphs do exist.  Let $G^*$ be the line graph of $G$, that is, $G^*$ has the vertex set $E(G)$ with two vertices adjacent if the corresponding edges in $G$ share a vertex. Then $G^*$ has $N = \frac{nd}{2}$ vertices and is $D$-regular with $D = 2d-2$. Moreover, Hoffman~\cite{hoffman} has observed that the smallest eigenvalue of $G^*$ is -2.
                
                Note that the vertices in $G^*$ corresponding to $e_1, e_2, \dots, e_n$  form an induced cycle. This implies that $V(G^*) \setminus \{ e_3, e_4, \dots, e_n\}$ is a zero forcing set in $G^*$. Namely, the vertex $e_i$ forces $e_{i+1}$ for $i = 2, 3, \dots, n-1$. This zero forcing set has order $N-(n-2)$. Finally, notice that in $G^*$, we have
                $$-\frac {2\lambda_{\min}N}{d-\lambda_{\min}} = \frac{4 \cdot \frac{nd}{2}}{2d-2 + 2} =n.\qedhere$$
            \end{proof}

            Next, we turn our attention to  the second part of Theorem~\ref{thm:zeroforcing_pseudo}, which says that any 
            $\ndl$-graph $G$  satisfies
            $$Z(G) \leq n \left(1-\frac {1}{2(d-\lambda)} \log \left(\frac{d-\lambda}{2\lambda +1} \right) \right).$$ In particular, if $\lambda = d^{1-\epsilon}$ for some $\epsilon >0$, then $n- Z(G) = \Omega \left(\frac{n \log d}{d} \right)$.            
      
            \begin{proof}[Proof of Theorem~\ref{thm:zeroforcing_pseudo} (ii)] 
                We greedily construct a witness. In each step $i$, we will select vertices $s_i, t_i \in U_{i-1}$ and a set $U_i \subs U_{i-1}$. Start with $U_0 = V$, the vertex set of $G$. Assuming that the steps $1, \dots, i-1$ were executed, let $s_i$ be any vertex in $U_{i-1}$ satisfying $1 \leq \deg_{G[U_{\tk{i-1}}]}(s_i) \leq (d- \lambda)\frac{|U_{i-1}|}{n}+ \lambda$. We fix $t_i$ to be any neighbour of $s_i$, and set $U_i = U_{i-1} \setminus N_G(s_i)$ (note that $N_G(s_i)$ contains $s_i$). The algorithm continues as long as $|U_i| > \frac{\lambda n}{d+ \lambda}$. Denote the final number of steps by $k$.
                
                It is clear that such $k$-tuples $(\mb s,\mb t)$ form a witness. We will show that there is a choice for $s_i$ throughout the algorithm, and that $k \geq \frac {n}{2(d-\lambda)} \log \left(\frac{d-\lambda}{2\lambda +1} \right) $.
                
                \begin{CLAIM} \tk{If $\lvert U_i\rvert>\frac{\lambda n}{d+\lambda}$, then} the induced subgraph $G[U_i]$ contains a vertex $u$ satisfying
                $1 \leq \deg_{G[U_i]}(u) \leq (d- \lambda)\frac{|U_i|}{n}+ \lambda$.
                \end{CLAIM}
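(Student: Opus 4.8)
The plan is to prove this claim by a double-counting / averaging argument on the edges of $G[U_i]$, using the edge-distribution estimate of Theorem~\ref{thm:ks:edgedist}(ii) applied with $U = W = U_i$.

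First I would observe that the upper bound $\deg_{G[U_i]}(u) \le (d-\lambda)\frac{|U_i|}{n} + \lambda$ holds for \emph{some} $u \in U_i$ simply because it holds on average: by Theorem~\ref{thm:ks:edgedist}(ii) with $U = W = U_i$, writing $|U_i| = x$, we have $e(U_i, U_i) \le \frac{d x^2}{n} + \lambda x(1 - \tfrac{x}{n}) \le \frac{dx^2}{n} + \lambda x$, and since $e(U_i,U_i)$ counts each internal edge twice, the average degree in $G[U_i]$ is $\frac{e(U_i,U_i)}{x} \le \frac{dx}{n} + \lambda = (d-\lambda)\frac{x}{n} + \lambda \cdot \frac{x}{n} + \lambda \le (d-\lambda)\frac{x}{n} + \lambda + \lambda\frac{x}{n}$; one has to be a touch careful here, but the cleaner route is $\frac{e(U_i,U_i)}{x} \le \frac{dx}{n}+\lambda(1-\tfrac xn) = (d-\lambda)\tfrac xn + \lambda - \lambda \tfrac{x}{n}\cdot\tfrac{?}{}$ — let me instead just bound the average degree by $\frac{dx}{n} + \lambda \le (d-\lambda)\frac xn + \lambda$, which is valid since $\frac{dx}{n} \le (d-\lambda)\frac xn + \lambda\frac xn \le (d-\lambda)\frac xn + \lambda$. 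Hence some vertex $u$ attains at most the average, giving the upper bound.

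The more delicate half is producing a vertex that simultaneously satisfies $\deg_{G[U_i]}(u) \ge 1$, i.e.\ is \emph{not} isolated in $G[U_i]$. For this I would use the hypothesis $|U_i| > \frac{\lambda n}{d+\lambda}$: apply Theorem~\ref{thm:ks:edgedist}(ii) again, this time to get a \emph{lower} bound on $e(U_i, U_i)$, namely $e(U_i,U_i) \ge \frac{dx^2}{n} - \lambda x(1-\tfrac xn) \ge \frac{dx^2}{n} - \lambda x$. Since $x > \frac{\lambda n}{d+\lambda}$ is equivalent to $\frac{dx}{n} > \lambda - \frac{\lambda x}{n}$, i.e.\ $\frac{dx^2}{n} > \lambda x(1-\tfrac xn)$, this lower bound is strictly positive, so $G[U_i]$ contains at least one edge and hence a vertex of degree $\ge 1$. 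Combined with the first part — the set of vertices meeting the upper degree bound is nonempty, and in fact the upper bound $\frac{(d-\lambda)x}{n}+\lambda \ge 1$ whenever $x$ is in the relevant range (which should be checked) — a short counting argument shows these two sets must intersect, or more simply: among the endpoints of an edge of $G[U_i]$, or among all vertices, one can pick $u$ with $1 \le \deg_{G[U_i]}(u) \le$ the average; the precise argument is that if every non-isolated vertex had degree exceeding the average then the average over non-isolated vertices would exceed the overall average, a contradiction once one notes isolated vertices only drag the average down.

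\textbf{The main obstacle} I anticipate is the bookkeeping needed to guarantee the two conditions hold for the \emph{same} vertex $u$: separately we get ``some vertex has small degree'' and ``some vertex has positive degree,'' and merging them requires arguing that the minimum positive degree in $G[U_i]$ does not exceed the average degree — which is true but needs the observation that deleting isolated vertices can only decrease the average degree, so the average over the nonempty set of non-isolated vertices is still at most the global average $\frac{dx}{n}+\lambda \le (d-\lambda)\frac xn + \lambda$. A secondary point to verify is that the stated upper bound $(d-\lambda)\frac{|U_i|}{n} + \lambda$ is at least $1$ throughout, so the interval $[1, (d-\lambda)\frac{|U_i|}{n}+\lambda]$ is nonempty; this follows from $\lambda \ge 0$ (or, if $\lambda < 1$, from the regime of $|U_i|$), but it is worth stating explicitly. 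Everything else is routine substitution of the inequality $|U_i| > \frac{\lambda n}{d+\lambda}$ into the two-sided expander-mixing bound.
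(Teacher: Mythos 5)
Your two ingredients (the spectral bound of Theorem~\ref{thm:ks:edgedist}(ii) applied with $U=W=U_i$, and the non-independence of $U_i$ when $|U_i|>\frac{\lambda n}{d+\lambda}$) are the right ones, but the step that merges them contains a genuine error. You assert that the minimum positive degree in $G[U_i]$ does not exceed the average degree of $G[U_i]$, justified by ``deleting isolated vertices can only decrease the average degree.'' This is backwards: deleting isolated vertices keeps the degree sum fixed while shrinking the vertex set, so it can only \emph{increase} the average. Hence the average over the non-isolated vertices may well exceed the overall average of $G[U_i]$, and there is no contradiction in every non-isolated vertex having degree above that overall average --- for instance, if $G[U_i]$ were a single edge plus many isolated vertices, the overall average is near $0$ while every non-isolated vertex has degree $1$. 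So, as written, the existence of one vertex satisfying both $\deg_{G[U_i]}(u)\ge 1$ and $\deg_{G[U_i]}(u)\le (d-\lambda)\frac{|U_i|}{n}+\lambda$ does not follow from your two separate facts.

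The fix is exactly what the paper does: apply Theorem~\ref{thm:ks:edgedist}(ii) not to $U_i$ but to the set $W$ of non-isolated vertices of $G[U_i]$ (equivalently, argue by contradiction assuming every non-isolated vertex has too large a degree). Since removing isolated vertices does not change the degrees of the remaining vertices, one gets
$e(W,W) \le |W|\bigl(\tfrac{d|W|}{n} + \lambda\bigl(1-\tfrac{|W|}{n}\bigr)\bigr) = |W|\bigl((d-\lambda)\tfrac{|W|}{n}+\lambda\bigr) \le |W|\bigl((d-\lambda)\tfrac{|U_i|}{n}+\lambda\bigr)$,
where the last inequality uses $W\subseteq U_i$ and the monotonicity of the bound in the size of the set. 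Thus the average degree over $W$ is at most $(d-\lambda)\frac{|U_i|}{n}+\lambda$, and since $W\neq\emptyset$ by your non-independence argument, some $u\in W$ satisfies both conditions (which also disposes of your secondary worry about the upper bound being at least $1$). In short, what rescues the averaging is the monotonicity of the spectral bound in $|W|$, not any comparison between the average over $W$ and the average over $U_i$; with that replacement your argument becomes the paper's proof.
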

                Suppose that some set $U_i$ does not satisfy the Claim. Since $|U_i| > \frac{\lambda n}{d+ \lambda}$, it is not an independent set by Theorem~\ref{thm:ks:edgedist}. Therefore, removing all the isolated vertices in $G[U_i]$, we get a non-empty set of vertices $W \subs U_i$ in which every vertex $u$ satisfies 
                $\deg_{G[W]}(u) \tk{>} (d- \lambda)\frac{|U_i|}{n}+ \lambda$. In particular,
                $$\tk{e}(W, W) > 2 \cdot \frac{|W|}{2} \left( (d- \lambda)\frac{|U_i|}{n}+ \lambda \right),$$
                recalling that we are counting each edge in $E(W, W)$ twice. On the other hand, Theorem~\ref{thm:ks:edgedist} implies that
                $$\tk{e}(W, W) \leq {|W|}\left(\frac{d|W|}{n}+ \tk{\lambda} \left(1 - \frac{|W|}{n} \right) \right) \leq |W| \left( (d- \lambda)\frac{|U_i|}{n}+ \lambda\right). $$
                We reached a contradiction, which completes the proof of the Claim.
                
                Now denote $a_i = \frac{|U_i|}{n}$. By construction, $a_0 = 1$ and
                $$a_{i+1} \geq a_i -  \frac{d-\lambda}{n} \cdot a_i - \frac{\lambda + 1}{n}.$$
                
                \begin{CLAIM}  
                    For $i \leq \frac{n}{2(d- \lambda)}\log \frac{d-\lambda}{2 \lambda +1}$,
                    $a_i \geq \frac{\lambda}{d+\lambda}$.
                \end{CLAIM}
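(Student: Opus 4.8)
The plan is to solve the displayed linear recurrence for $(a_i)$ up to inequality, and then check that the resulting explicit lower bound stays above $\tfrac{\lambda}{d+\lambda}$ on the stated range of $i$. Recall where the recurrence comes from: by the previous Claim, as long as $|U_i|>\tfrac{\lambda n}{d+\lambda}$ we may pick $s_{i+1}\in U_i$ with $\deg_{G[U_i]}(s_{i+1})\le (d-\lambda)\tfrac{|U_i|}{n}+\lambda$, and since $N_G(s_{i+1})$ is the \emph{closed} neighbourhood, $|U_{i+1}|=|U_i|-|N_G(s_{i+1})\cap U_i|\ge |U_i|-(d-\lambda)\tfrac{|U_i|}{n}-(\lambda+1)$; dividing by $n$ gives $a_{i+1}\ge\bigl(1-\tfrac{d-\lambda}{n}\bigr)a_{i}-\tfrac{\lambda+1}{n}$. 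This inequality is available for each step the algorithm actually performs, so I would run a single induction on $i$ that simultaneously establishes $a_i>\tfrac{\lambda}{d+\lambda}$ (so that step $i+1$ is legal and the recurrence extends) and the explicit bound below.

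To solve the recurrence I would first homogenise it. Its fixed point is $-\tfrac{\lambda+1}{d-\lambda}$, so putting $b_i:=a_i+\tfrac{\lambda+1}{d-\lambda}$ one checks the constant term cancels and $b_{i+1}\ge\bigl(1-\tfrac{d-\lambda}{n}\bigr)b_{i}$. Since $0\le 1-\tfrac{d-\lambda}{n}<1$, iterating from $b_0=1+\tfrac{\lambda+1}{d-\lambda}=\tfrac{d+1}{d-\lambda}$ gives
\[
a_i \;\ge\; \Bigl(1-\tfrac{d-\lambda}{n}\Bigr)^{\!i}\frac{d+1}{d-\lambda}\;-\;\frac{\lambda+1}{d-\lambda}.
\]
Thus the Claim reduces to showing $\bigl(1-\tfrac{d-\lambda}{n}\bigr)^{i}\ge\frac{d-\lambda}{d+1}\bigl(\frac{\lambda}{d+\lambda}+\frac{\lambda+1}{d-\lambda}\bigr)=\frac{d(2\lambda+1)+\lambda}{(d+1)(d+\lambda)}$ for $i$ in the range. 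We may assume the range contains a positive integer (otherwise only $i=0$ arises, where $a_0=1>\tfrac{\lambda}{d+\lambda}$), which in particular forces $d-\lambda>2\lambda+1$.

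Since $\bigl(1-\tfrac{d-\lambda}{n}\bigr)^i$ is decreasing in $i$, it suffices to treat the largest admissible value of $i$. Using $-\log(1-x)\le \tfrac{x}{1-x}$ with $x=\tfrac{d-\lambda}{n}$, together with the mild size condition $d-\lambda\le n/2$ (automatic in the pseudorandom regime of interest), one gets $\bigl(1-\tfrac{d-\lambda}{n}\bigr)^{i}\ge\exp\!\bigl(-\tfrac{i(d-\lambda)}{n-(d-\lambda)}\bigr)\ge\exp\!\bigl(-\log\tfrac{d-\lambda}{2\lambda+1}\bigr)=\tfrac{2\lambda+1}{d-\lambda}$. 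It then remains to verify the purely algebraic inequality $\tfrac{2\lambda+1}{d-\lambda}\ge\frac{d(2\lambda+1)+\lambda}{(d+1)(d+\lambda)}$, i.e. $(2\lambda+1)(d+1)(d+\lambda)\ge(d-\lambda)\bigl(d(2\lambda+1)+\lambda\bigr)$; expanding, the difference of the two sides equals $d(4\lambda^2+3\lambda+1)+\lambda(3\lambda+1)>0$, so in fact $a_i>\tfrac{\lambda}{d+\lambda}$ strictly on the whole range and the induction closes. The computation itself is routine; the only point needing care is the bookkeeping in the first paragraph — the recurrence is guaranteed only while the algorithm is still running, which is precisely why the bound $a_i>\tfrac{\lambda}{d+\lambda}$ must be carried inside the induction rather than deduced afterwards.
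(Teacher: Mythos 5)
Your proposal is correct and follows essentially the same route as the paper: solve the linear recurrence $a_{i+1}\geq\bigl(1-\tfrac{d-\lambda}{n}\bigr)a_i-\tfrac{\lambda+1}{n}$ explicitly (your homogenised form $a_i\geq\bigl(1+\tfrac{\lambda+1}{d-\lambda}\bigr)\bigl(1-\tfrac{d-\lambda}{n}\bigr)^i-\tfrac{\lambda+1}{d-\lambda}$ is exactly the paper's bound) and then lower-bound $\bigl(1-\tfrac{d-\lambda}{n}\bigr)^i$ by $\tfrac{2\lambda+1}{d-\lambda}$ using $\tfrac{d-\lambda}{n}\leq\tfrac12$ and the choice of $i$. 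The only differences are cosmetic: the paper drops the factor $1+\tfrac{\lambda+1}{d-\lambda}\geq 1$ and concludes via $\tfrac{\lambda}{d-\lambda}\geq\tfrac{\lambda}{d+\lambda}$, whereas you retain it and verify a slightly longer algebraic inequality, and you make explicit the (correct) point that the recurrence is only available while the algorithm runs, so the bound is carried inside the induction.
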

                
                It is not hard to show by induction that for all $i$,
                $$a_i \geq \left(1 + \frac{\lambda +1}{d-\lambda} \right) 
                \left(1 - \frac{d-\lambda}{n}\right) ^i
                - \frac {\lambda +1}{d- \lambda}.$$

                    Now we estimate $a_i$, ignoring the constant $\left(1 + \frac{\lambda
                        +1}{d-\lambda} \right)$ and using the inequality $1 - \frac{d-\lambda}{n}
                    \geq e^{-\frac{2(d-\lambda)}{n}}$ for $\tk{\frac{d-\lambda}{n}} <
                    \frac12$. \tk{This gives}
                    \begin{align*}
                        a_i &\geq e^{-\frac{2(d-\lambda)}{n} \cdot \frac{n}{2(d-\lambda)}\log \left (\frac{d-\lambda}{2 \lambda +1} \right)}  - \frac {\lambda +1}{d- \lambda} 
                        = \frac{2 \lambda +1}{d- \lambda} - \frac{\lambda + 1}{d- \lambda}
                        \tk{=} \frac{\lambda}{d+\lambda},
                    \end{align*}
                    as required.
                    We get that the algorithm continues for at least $k = \frac{n}{2(d- \lambda)}\log \frac{d-\lambda}{2 \lambda +1}$ steps, so
                    $$Z(G) \leq n\left(1- \frac{1}{2(d- \lambda)}\log \frac{d-\lambda}{2 \lambda +1} \right).\qedhere$$
                
            \end{proof}    

            To show that this bound is tight up to a constant factor, we exhibit a sequence of $\ndl$-graphs $G_m$ with $\lambda = O \left( \sqrt{d} \right)$ whose forcing number is at least $n \left(1 -  \frac{  \log_2 d }{2d}  +o(1)\right)$. 
            We use the following construction from~\cite[Section 3]{ks06}. For an odd integer $m$, the vertices of $G_m$ are all binary vectors of length $m$ with an odd number of ones except for the all-one vector. Two distinct vertices are adjacent iff the inner product of the corresponding vectors is 1 modulo 2. This graph has $n_m = 2^{m-1}-1$ vertices, degree $d_m = \frac {n_m-3}{2}$, and second largest eigenvalue $\lambda (G_m) = 1 + 2^{\frac{m-3}{2}} = O \left(\sqrt{d_m} \right)$. It is easy to check that if $(\mb s, \mb t)$ is a $k$-witness in $G_m$, then the vectors corresponding to $t_1, t_2, \dots, t_k$ are linearly independent, and therefore $k \leq m = (1+o(1))\log_2 n_m = \frac{(1+o(1))n_m}{2d_m}\log_2 d_m$. This implies the required bound on $Z(G_m)$.

	        \section{Concluding Remarks}
			\begin{itemize}
				    \item Theorem~\ref{thm:forcing_gnp} can be extended to $p \tk{=} \omega \left(n^{-1} \right)$. The proof combines our second moment estimates with Talagrand's inequality, along the lines of~\cite[Theorem 7.4]{jlr}, which finds independent sets of order $p^{-1}\log(np)$ in $\gnp$. Since this proof gives no additional insight, we put it into the appendix.
					\item It would be interesting to study \tk{the} minimum rank of quasirandom graphs. Recall that the minimum rank of the random graph $\gn$ is bounded away from $0$, that is, $mr(\gn) \geq 0.14 n$ with high probability (see, e.g.~\cite{hhms}). We wonder if this also holds for $\ndl$-graphs when $d$ is linear in $n$ and $\lambda=o(n)$.
				\item Another question would be to find the zero forcing number of
                                  the random regular graph $G_{n,d}$, which is a graph chosen
                                  uniformly at random from all $n$-vertex $d$-regular graphs. We are
                                  interest\tk{ed in} $G_{n,d}$ for a large constant $d$ and $n \rightarrow \infty$. 
				A greedy argument (see, e.g.~\cite{acdp}) shows that $Z\left(G_{n,d}\right) \leq \tk{n}\left(1-\frac{1}{d-1} \right)$ deterministically, whereas Theorem~\ref{thm:zeroforcing_pseudo} (ii) implies that for large $d$, with high probability, $Z \left( G_{n, d} \right) \leq n\left(1- \frac{\log d}{4d} \right)$. This follows from the fact that with high probability, $G_{n, d}$ is an $\ndl$-graph with $\lambda \leq 3\sqrt{d}$ (see, e.g.,~\cite{friedman}). The lower bound, $Z \left( G_{n, d} \right) \geq n\left(1- \frac{40\log d}{d} \right)$,  is an immediate consequence of the fact that with high probability, $G_{n,d}$ contains edges between any two sets $S$, $T$ with $|S|,\, |T| \geq \frac{20n \log d }{d}$ (see, e.g.,~\cite[Lemma 3.6]{kls}). It would be interesting to find the correct constant. 
			\end{itemize}

		\appendix
         \section{The random graph with small \emph{p}} 
       		To extend Lemma~\ref{lemma:gnp_smm} for small $p$ we use Talagrand's Inequality (see, e.g.,~\cite[Theorem 2.29]{jlr}). 
			\begin{LEMMA}
				Assume $ \omega\left(\tk{n^{-1}} \right)=p< \frac{\log^2 n}{\sqrt{n}}$. Let the vertex set $V$ of $\gnp$ be partitioned into $V_1$ and $V_2$ with $|V_1| = \lfloor \frac n2 \rfloor$ and $|V_2| = \lceil \frac n2 \rceil$, and let $k_{-\epsilon} = \frac{(1-\epsilon)(2+\sqrt{2})}{p}\cdot \log (np)$  with $0< \epsilon < \frac 12$.  With high probability, $\gnp$ contains a divided $k_{-\epsilon}$-witness.
			\end{LEMMA}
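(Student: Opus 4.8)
The plan is to deduce the lemma from a sharp concentration statement for the order of the largest divided witness, obtained via Talagrand's inequality, combined with a first‑ and second‑moment analysis — in the spirit of \cite[Theorem~7.4]{jlr} — that locates the relevant median and recycles the estimates of Lemma~\ref{lemma:gnp_smm}. Throughout write $k^\ast=\tfrac{(2+\sqrt 2)\log(np)}{p}$, so that $k_{-\epsilon}=(1-\epsilon)k^\ast$, and let $W=W(\gnp)$ denote the order of the largest divided witness with respect to the fixed partition $V=V_1\cup V_2$. Since a divided witness is determined solely by the bipartite edges between $V_1$ and $V_2$, we view $\gnp$ for this purpose as the product probability space of the independent coordinates $(\omega_v)_{v\in V_1}$, where $\omega_v=N(v)\cap V_2$.

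The first step is to verify that $W$ has the two properties needed for Talagrand's inequality. It is $1$‑Lipschitz on this product space: if a divided witness of order $w$ uses a vertex $v\in V_1$ as some $s_i$, then deleting the index $i$ leaves a divided witness of order $w-1$ not involving $v$ (every surviving diagonal and superdiagonal constraint is untouched), so re‑sampling $\omega_v$ changes $W$ by at most $1$. It is also $f$‑certifiable with $f(s)=s$: a divided witness $\bigl((s_i)_{i\le s},(t_i)_{i\le s}\bigr)$ is certified by the $s$ coordinates $\omega_{s_1},\dots,\omega_{s_s}$, because the membership of each $t_j\in V_2$ in each $\omega_{s_i}$ decides the edge $s_it_i$ and the non‑edge $s_it_j$. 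By Talagrand's inequality in its Lipschitz/certifiable form (\cite[Theorem~2.29]{jlr}), for the median $M$ of $W$ and every $\lambda>0$,
\[
\pr{\,|W-M|>\lambda\,}\le 4\exp\Bigl(-\frac{\lambda^2}{4(M+\lambda)}\Bigr).
\]
By Lemma~\ref{lemma:gnp_fmm} (which applies since $p=\omega(n^{-1})$), with high probability $\gnp$ has no witness of order $(1+o(1))k^\ast$, so $M\le(1+o(1))k^\ast$; and in the whole range $\omega(n^{-1})=p<\log^2n/\sqrt n$ one has $k^\ast=\omega(\log n)$, since $1/p=\omega(\sqrt n/\log^2 n)$. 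Choosing $\lambda=3\sqrt{M\log n}=o(k^\ast)$ therefore shows $|W-M|=o(k^\ast)$ with high probability.

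Given this concentration, it suffices to prove that the median satisfies $M\ge(1-\tfrac\epsilon2)k^\ast$: indeed, then $W\ge M-o(k^\ast)\ge(1-\tfrac\epsilon2-o(1))k^\ast\ge(1-\epsilon)k^\ast=k_{-\epsilon}$ with high probability, so $\gnp$ contains a divided $k_{-\epsilon}$‑witness. Equivalently, writing $k_0=(1-\tfrac\epsilon2)k^\ast$ and letting $X_{k_0}$ be the number of divided $k_0$‑witnesses, it is enough to show $\pr{X_{k_0}\ge 1}$ is bounded away from $0$ (the concentration of $W$ then forces $M\ge(1-\tfrac\epsilon2-o(1))k^\ast$, which is enough after a harmless adjustment of $\epsilon$). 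The first‑moment estimate from the proof of Lemma~\ref{lemma:gnp_smm} already gives $\er{X_{k_0}}\to\infty$ — that computation needs only $k_0\le n/4$ and $p\le 0.1$, both valid here.

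This last step is where I expect the main difficulty. For small $p$ the plain second moment of $X_{k_0}$ is inadequate: two divided witnesses sharing a single diagonal edge have their joint probability inflated by a factor $p^{-1}$ (conditioning makes the shared edge free), and there are of order $\er{X_{k_0}}^2\,k_0^2/n^2$ such pairs, so $\var{X_{k_0}}=\Omega\!\bigl(\tfrac{k_0^2}{n^2p}\bigr)\er{X_{k_0}}^2$, which need no longer be $o(\er{X_{k_0}}^2)$ — exactly the obstruction that confines Lemma~\ref{lemma:gnp_smm} to the range $p\ge\log^2 n/\sqrt n$. Following the scheme of \cite[Theorem~7.4]{jlr}, one circumvents this by passing to a suitable variant of the witness count for which these highly correlated pairs are suppressed — restricting to witnesses that are sufficiently spread out, and exposing the edges between $V_1$ and $V_2$ sequentially (column by column), so that only a bounded number of partial configurations, whose numbers of admissible extensions are controlled by Chernoff bounds, enter the computation — so that the remaining terms of the second‑moment expansion of Lemma~\ref{lemma:gnp_smm}, notably the robust regime $a+b=\Omega(1/p)$, sum to $o(1)$ and Chebyshev applies. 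Carrying out this reduction carefully — the only genuinely new ingredient — yields $\pr{X_{k_0}\ge 1}\to 1$, and combined with the concentration of $W$ this completes the proof.
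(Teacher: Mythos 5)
Your scaffolding is the one the paper uses — second‑moment estimates recycled from Lemma~\ref{lemma:gnp_smm} plus Talagrand's inequality in the spirit of \cite[Theorem 7.4]{jlr}, and your Lipschitz/certifiability observations are fine (your certificate of size $k$ is even slightly better than the paper's $2k$) — but there is a genuine gap at precisely the point you yourself flag as ``the main difficulty.'' You reduce the lemma to showing that $\pr{X_{k_0}\ge 1}$ is bounded away from $0$ (indeed you assert it tends to $1$), and you propose to obtain this from an unspecified ``suitable variant of the witness count'' (spread‑out witnesses, sequential exposure of columns) for which the variance ratio becomes $o(1)$ so that Chebyshev applies. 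No such argument is given, and the paper's computation indicates it is not a routine reduction: in the range $p<\log^2 n/\sqrt n$ one has $k\gg\sqrt n$, and the dominant troublesome terms in $\var{X_k}/(\er{X_k})^2$ are not the shared‑diagonal‑edge terms with their $p^{-1}$ gain that you single out, but the $d=0$ vertex‑overlap terms, which behave like $(k^2/n)^{a+b}$ because of the ordering/position factors in \eqref{eq:tabd}; these do not sum to $o(1)$ here. What the paper actually proves for $a+b<\frac{24}{\epsilon p}$ (by keeping the binomials $\binom{k}{u}$ rather than $k^u$ and maximizing $S_u=(2k/n)^{2u}\binom ku^3(1-p)^{-u^2}$ over $u\le u_1=2k(k/n)^{2/3}$, Claims~\ref{claim:second derivative of Su} and~\ref{claim:values of Su}) is only $\var{X_k}/(\er{X_k})^2\le e^{2k(np)^{-\frac12}}$, hence merely the subexponential bound $\pr{X_{k_{-\epsilon}}>0}\ge e^{-4k_{-\epsilon}(np)^{-\frac12}}$ — not a probability bounded away from zero.

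The reason this weak bound suffices, and the reason your stronger demand is unnecessary, is that the paper applies Talagrand in its two‑point form, $\pr{w\le k_{-2\epsilon}}\,\pr{w\ge k_{-\epsilon}}\le e^{-\epsilon^2 k_{-\epsilon}/8}$, so any lower bound on $\pr{w\ge k_{-\epsilon}}$ exceeding $e^{-\epsilon^2k_{-\epsilon}/16}$ closes the argument after the harmless $\epsilon\mapsto2\epsilon$ adjustment; no statement about the median beyond what this inequality encodes is needed. So your plan can be repaired by replacing the step ``$\pr{X_{k_0}\ge1}=\Omega(1)$ via a modified count and Chebyshev'' with the refined bound on $T_{a,b,d}$ for small $a+b$ described above and then running your Talagrand step at deviation $\sim\epsilon k^\ast$; as written, that central estimate — the only genuinely new ingredient, as you say — is missing, and the route you sketch for it (making the plain variance ratio $o(1)$) is not what \cite[Theorem 7.4]{jlr} does and is unlikely to work in this range of $p$.
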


			\begin{proof}
			 Denote by $w(G)$ the order  of the largest divided witness. For a given $\epsilon$, we will actually show that $w\left(\gnp \right) \geq k_{-2\epsilon} = \frac{(1-2\epsilon)(2+\sqrt{2})}{p}\cdot \log (np)$ whp. The first step is a second moment lower bound on the probability $\pr{w\left(\gnp \right) \geq k_{-\epsilon}}$. For now, we write $k = k_{-\epsilon}$. 
			The proof is identical to the proof of Lemma~\ref{lemma:gnp_smm} down to equation \eqref{eq:tabd for large ab}. The case $a+b \geq \frac{24}{\epsilon p}$ remains unchanged (and that is the case which determines $k$). Hence we assume $a+b < \frac{24}{\epsilon p}$.
 Equation \eqref{eq:tabd} from the proof of Lemma~\ref{lemma:gnp_smm} implies
\begin{align*}
	T_{a,b,d} &\leq \left(\frac{2}{n} \right)^{a+b} k^{a+b-d} p^{-d} \binom{k}{d}\binom{k}{a-d} \binom{k}{b-d}(1-p)^{-ab} \\
				& \leq \left(\frac{2k}{n} \right)^{a+b} \binom{k}{d}\binom{k}{a}\binom{k}{b} (1-p)^{-ab}\tk{.}
\end{align*}
	Denote $a+b = 2u$ and define $S_u := \left( \frac{2k}{n} \right)^{2u} \binom{k}{u}^3(1-p)^{-u^2}$. It is clear that $T_{a,b,d} \leq S_{\frac{a+b}{2}}=S_u$, so we will maximise $S_u$ for $u \in \left[1, \frac{12}{\epsilon p} \right]$. We compute
$$ \frac{S_{u+1}}{S_u} = \left(\frac{2k}{n} \right)^2 \cdot \left( \frac{k-u}{u+1} \right)^3(1-p)^{-2u-1}. $$

We will use the following two claims.
\begin{CLAIM} \label{claim:second derivative of Su}
	  $\frac{S_{u+1}}{S_u}$ has at most one local extremum on $\left[ 1, \frac{12}{\epsilon p} \right]$. 
\end{CLAIM}
\begin{CLAIM} \label{claim:values of Su}
	For $u_1 = 2k \left( \frac{k}{n} \right)^{\frac{2}{3}}$, $\frac{S_{u_1+1}}{S_{u_1}}<1$. Moreover, $\frac{S_2}{S_1} \geq 1 $.
\end{CLAIM}
	The two claims imply that the maximum of $S_u$ lies in the interval $[1, u_1]$. For, there is at most one point $u$ where $\frac{S_{u+1}}{S_u}$ switches from larger than one to smaller than one. The second claim implies that such a point lies in the interval $[1, u_1]$. We will prove the two claims later. Now we finish the proof assuming the claims, that is, we bound $S_u$ for $u \leq u_1$. 
	\begin{align*}
	    S_u &= \left( \frac{2k}{n} \right)^{2u} \binom{k}{u}^3(1-p)^{-u^2} 
	        \leq \left( \frac{4k^2}{n^2}\cdot \frac{e^3k^3}{ u^3} \right)^u e^{2pu^2}.
	\end{align*}
	We use the fact that
	\begin{enumerate}
	    \item $2pu_1 \leq 2pk \leq 4\log(np)$, and
	    \item the function $u \mapsto \left(\frac{4e^3k^5}{n^2u^3} \right)^u$ is increasing for $u^3 \leq \frac{4e^2 k^5}{n^2}$, and in particular on $[1, u_1]$.
	\end{enumerate}
	
	\begin{align*}
	    \log S_u & \stackrel{\textrm{(ii)}}{\leq} u_1 \left(\log \left(\frac{4k^2}{n^2}\cdot \frac{e^3n^2}{8k^2}\right)  +2pu_1\right) 
	    \stackrel{\textrm{(i)}}{\leq} u_1 \left(3 + 4\log(np) \right)
			\leq k \cdot 10 \log(np) \cdot \left( \frac kn \right) ^\frac{2}{3} \leq k (np)^{-\frac 12}.
	\end{align*}
	The third inequality, where $u_1 \tk{\ll} k$ is really used, is crucial in the calculation. We conclude that for all $a, b, d$ such that $a+b < \frac{24}{\epsilon p}$,
	\begin{equation*}
	    T_{a, b, d} \leq e^{k(np)^{-\frac 12} }\tk{.}
	\end{equation*}
	
	Summing over all such $a, b, d$ and using (\ref{eq:tabd for large ab}) we get
	\begin{equation*}
	    \frac{\var{X_k}}{\left( \er{X_k} \right)^2} \leq 
	    \sum_{\substack{
						a, b \in [k]\\
						a+b \geq \frac{\epsilon}{24p} }}
						\sum_{d=0}^a T_{a, b, d} 
		+ \sum_{\substack{
						a, b \in [k]\\
						a+b < \frac{\epsilon}{24p} }}
						\sum_{d=0}^a T_{a, b, d}						
		\leq k^3 e^{k(np)^{-\frac 12} } +o(1)
		\leq e^{2k(np)^{-\frac 12} }\tk{.}
	\end{equation*}
    We will apply a stronger form of Chebyshev's inequality, which reads $\pr{X_k > 0 }\geq  \frac{\left( \er{X_k} \right)^2}{\er{X_k^2}}$. For details, refer to~\cite[Remark 3.1]{jlr}. 
    \begin{align*}
        \frac{\er{X_k^2}}{(\er X_k)^2}   &= \frac{\var{X_k}}{(\er{X_k})^2}+1 \leq e^{4k(np)^{-\frac 12} }, \quad \text{so} \\
        \pr{X_k >0} &\geq e^{-4k(np)^{-\frac 12} }.
    \end{align*}
	To show concentration of the order of the largest divided witness $w\left( \gnp \right)$, we apply Talagrand's Inequality. The random graph $\gnp$ is modelled using vertex exposure. Formally, we fix an ordering of the vertices $v_1, v_2, \dots v_n$, and     define mutually independent random variables $\left( Z_i \right)_{i \in [n]}$, where $Z_i$ exposes the backward edges from the vertex $v_i$. Then the  $w\left(\gnp\right)$ is a function of $Z_1, \dots, Z_n$. This function is 1-Lipschitz, that is, if graphs $G$ and $G'$ differ only at the vertex $v_i$, then $|w(G)-w(G')| \leq 1$. Moreover, whenever $w(\gnp) \geq k$, there exist $2k$ certificate vertices, namely the vertices of a divided $k$-witness, which are responsible for the fact that $w(\gnp) \geq k$. 
	Hence we may apply~\cite[Theorem 2.29]{jlr} with $\psi(k) = 2k$ in their notation. Recalling that $k_{-\epsilon} = \frac{(1-\epsilon)(2+\sqrt{2})}{p}\cdot \log (np)$, we have
		$$\pr{w(G) \leq k_{-2\epsilon} } \pr{ w(G) \geq k_{-\epsilon}} \leq e^{-\frac{\left( k_{-\epsilon} -k_{-2\epsilon} \right)^2}{8k_{-\epsilon}}} \leq e^{-\frac{\epsilon^2 k_{-\epsilon}}{8}}.$$
		We have shown that $\pr{w(G) \geq k_{-\epsilon}} \geq e^{-4k_{-\epsilon}(np)^{-\frac 12}}.$
	Putting this together with the previous inequality and taking $np > \left(\frac{10}{\epsilon} \right)^4$,
		$$\pr{w(G) \leq k_{-2\epsilon}} \leq e^{-\frac{\epsilon^2 k_{-\epsilon}}{8} + 4k_{-\epsilon}(np)^{-\frac 12}} \leq e^{-\frac{\epsilon^2 k_{-\epsilon} }{16}} \longrightarrow 0,$$
as required.

\begin{proof}[Proof of Claim~\ref{claim:second derivative of Su}]
	We differentiate $f(u) =  \left( \frac{k-u}{u+1} \right)(1-p)^{-2u/3}$, the part of $  \frac{S_{u+1}}{S_u} $ which is dependent on $u$, raised to power $\frac 13$ to simplify the calculation. This is valid since the function $x \mapsto x^3$ is increasing. Let us replace $(1-p)$ by $e^{-q}$, where $q = -\log(1-p)$.
	$$f'(u) = e^{2qu/3} \left( \frac{-1}{u+1} -  \frac{k-u}{(u+1)^2} + \frac{k-u}{u+1}\cdot \frac{2q}{3} \right)$$
	$$f'(u) = 0 \quad \Leftrightarrow \quad k+1 + \frac{2q}{3}\left( -ku -k + u^2 +u \right)=0$$
This is a quadratic equation in $u$. Viet\'e's formulae give that if $u_-$ and $u_+$ are solutions, then $u_- + u_+ = k-1$. Therefore the larger solution, say $u_+$, is at least $\frac{k-1}{2}$. Since $\frac{12}{\epsilon p} < \frac{k-1}{2}$ provided that $np$ is sufficiently large, $u_-$ is the only potential local extremum in $\left[ 1, \frac{12}{\epsilon p} \right]$.	
\end{proof}
\begin{proof}[Proof of Claim~\ref{claim:values of Su}]
	Take $u_1$ as in the statement. Then $\frac{k^3}{u_1^3}= \frac 18 \left( \frac nk \right)^2$
$$ \frac{S_{u+1}}{S_u} \leq \left(\frac{2k}{n} \right)^2 \cdot 2^{-3} \cdot \frac{n^2}{k^2} \cdot e^{8pk \left( \frac kn \right)^{2/3}} = (1+o(1))\frac 12.$$
	Furthermore,
	$$\frac{S_2}{S_1} = \left( \frac{2k}{n}^2 \right) \left( \frac{k-1}{2} \right)^3 (1-p)^{-2} \geq \frac{k^5}{10n^2}\cdot \frac 14 > 1.\qedhere$$
\end{proof}
\tk{This concludes the proof of the lemma.}
\end{proof}    
        

\begin{thebibliography}{40}
	\bibitem{aimminrank}
        AIM minimum rank -- special graphs work group, Zero forcing sets and the minimum rank of graphs, \emph{Linear Algebra Appl.} 428.7 (2008), 1628--1648.
	
      \bibitem{catalog}
        A dynamic catalog of spectral data for families of graphs,
        \url{http://aimath.org/pastworkshops/catalog2.html}. Accessed on 1 May 2017. 

        \bibitem{ahl}
         N.  Alon,  S.  Hoory,  and  N.  Linial, The  Moore bound  for  irregular graphs, \emph{ Graphs Combin.} 18 (2002), 53 -- 57.
        
        \bibitem{ars}
         N. Alon, L. R\'onyai, and T. Szab\'o, Norm-graphs: variations and applications, \emph{J. Combin. Theory Ser. B} 76 (1999), 280--290.
        
        \bibitem{alonspencer}  N. Alon and J.H. Spencer, \textbf{The probabilistic method}, Second Edition, John Wiley \& Sons, Inc., New York, 2000.
        
        \bibitem{acdp} D. Amos, Y. Caro, R. Davila, R. Pepper, Upper bounds on the $k$-forcing number of a graph, \emph{Discrete Appl. Math.} 181 (2015), 1--10.        
        
        \bibitem{adkmz}
	    A. Arenas, A. Diaz-Guilera, J. Kurths, Y. Moreno, and C. Zhou, Synchronization in complex networks. \emph{Phys.
Rep.} 469 (2008), 93 -- 153.

	\bibitem{bbm}
	    J. Balogh, B. Bollob\'as and R. Morris, Graph bootstrap percolation, \emph{Random Structures Algorithms} 41 (4) (2012), 413 -- 444.
	
     \bibitem{bbv}
		 A. Barrat, M. Barth\'elemy and A. Vespignani, \textbf{ Dynamical processes in complex networks},  Cambridge University Press, Cambridge, UK, 2008.
	
        
        \bibitem{bg}
            D. Burgarth and V. Giovannetti, Full control by locally induced relaxation, \emph{Phys. Rev. Lett.} 99 (2007), 100501.
        
        \bibitem{bbbg}
            D. Burgarth, S. Bose, C. Bruder and V. Giovannetti, Local controllability of quantum networks, \emph{Phys. Rev. A} 79 (2009), 060305 (R).
        
         \bibitem{cp}
        Y. Caro and R. Pepper, Dynamic approach to $k$-forcing, \arXiv{1405.7573v1} (2014).
        
        \bibitem{clr}
     J. Chalupa, P.L. Leath and G.R. Reich, Bootstrap percolation on a Bethe lattice,  \emph{J.  Phys.  C:  Solid  State Phys.} 12 (1979), L31.
     
        \bibitem{cfkr}
	 A. Coja-Oghlan, U. Feige, M. Krivelevich, D. Reichman, Contagious sets in
expanders, \emph{Proceedings of the 26th Symposium on Discrete Algorithms (SODA
2015)},  1953 -- 1987.
	
	
        \bibitem{dks}
        R. Davila, T. Kalinowski, and S. Stephen, Proof of a conjecture of Davila and Kenter regarding a lower bound for the forcing number in terms of girth and minimum degree, arXiv preprint. \arXiv{1611.06557} (2016).        

        \bibitem{dk}
        R. Davila, and F. Kenter. Bounds for the zero forcing number of a graph with large girth, \emph{ Theory Appl. Graphs} 2 (2015).
        
        
      \bibitem{fh13}
            S.M. Fallat and L. Hogben, 
		Minimum Rank, Maximum Nullity, and Zero Forcing Number of Graphs,  
\textbf{Handbook of Linear Algebra}, second edition (ed L. Hogben), CRC Press, Boca Raton, FL, 2014.

        
    \bibitem{fw}
     M.  Freidlin  and  A.  Wentzell,  Diffusion  processes  on  graphs  and  the averaging principle, \emph{Ann. Probab.} 21 (1993),  2215 -- 2245.
    
    \bibitem{friedman}
    J. Friedman, A Proof of Alon's Second Eigenvalue Conjecture and Related Problems, \emph{Mem.
Amer. Math. Soc.} (2008).
	
        \bibitem{fs} Z. F\"uredi and M.  Simonovits,  The  history of degenerate (bipartite) extremal graph problems, \textbf{Erd\H os Centennial}
(eds L. Lov\'asz, I. Ruzsa and V. T.-S\'os), vol. 25 of Bolyai Soc. Math. Stud., Springer,  2013, 169 -- 264.
        
        \bibitem{gprs}
        M. Gentner, L.D. Penso, D. Rautenbach and U.S. Souza, Extremal values and bounds for the zero forcing number, \emph{Discrete Appl. Math.} 214 (2016), 196 -- 200.
        
        \bibitem{gr}
        M. Gentner and D. Rautenbach, Some bounds on the zero forcing number of a graph, \arXiv{1608.00747} (2016).
        
        \bibitem{gr01}
        C. Godsil and G. Royle, \textbf{Algebraic graph theory}, Springer Verlag, New York 2001.
        
        \bibitem{granovetter}
    M. Granovetter, Threshold models of collective behavior, \emph{Am. J. Sociol.} 83, 1420 -- 1443 (1978).
        
        \bibitem{hhms} 
       T. Hall, L. Hogben, R.R. Martin and B. Shader, Expected values of parameters associated with the minimum rank of a graph, \emph{Linear Algebra Appl.} 433.1 (2010),  101--117.
              
        \bibitem{hoffman} A.J. Hoffman,  Some recent results on spectral properties of graphs, \emph{Beitr\"age zur Graphentheorie } (1968), 75 -- 80.
        
        \bibitem{jlr}
            S. Janson, T. \L uczak and A. Ruci\'{n}ski, \textbf{Random Graphs}, Wiley -- Interscience Series in Discrete Mathematics and Optimization, John Wiley \& Sons, New York, 2000.  
        
        \bibitem{kls}
             N. Kam\v{c}ev, T. \L uczak, B. Sudakov, Anagram-free colorings of graphs, \emph{Combin. Probab. Comput.}, to appear.
        \bibitem{kkt}
	D. Kempe, J. Kleinberg and E. Tardos. Maximizing the spread of influence in a social network. In \emph{ACM SIGKDD International Conference on Knowledge Discovery and Data Mining (KDD)} (2003), 137 -- 146.
        
        \bibitem{kst}
             T.  K\"ov\'ari,  V. T.  S\'os,  and  P.  Tur\'an,   On  a  problem  of  K.  Zarankiewicz, \emph{Colloq. Math.} 3 (1954), 50 -- 57.
            
        \bibitem{ks06}
            M. Krivelevich and B. Sudakov, Pseudo-random  graphs, \textbf{More Sets, Graphs and Numbers} (eds E. Gyori, G.O.H. Katona and L. Lov\'asz), vol. 15 of
        Bolyai Soc. Math. Stud., Springer,  2006 ,  199 -- 262. 
        

        \bibitem{taklimi}
        F.A. Taklimi, Zero Forcing Sets for Graphs, University of Regina, 2013. Diss. Ph.D.~thesis.
     
        
 
 
    \end{thebibliography}
\end{document}